\documentclass{amsart}

\usepackage[dvipsnames]{xcolor}
\usepackage{hyperref}

\newtheorem{theorem}{Theorem}[section]
\newtheorem{lemma}[theorem]{Lemma}

\theoremstyle{definition}
\newtheorem{definition}[theorem]{Definition}

\newtheorem{corollary}[theorem]{Corollary}
\newtheorem{proposition}[theorem]{Proposition}

\theoremstyle{remark}
\newtheorem{remark}[theorem]{Remark}

\numberwithin{equation}{section}

\begin{document}

\title[Characterization of Relative Hyperbolicity]{A Characterization of Relative Hyperbolicity via Morse and Contracting Boundaries}

\author{Vyshnav PT}
\address{Department of Mathematical Sciences, Indian Institute of Science Education and Research Mohali, India}
\email{mp21013@iisermohali.ac.in}



\author{Pranab Sardar}
\address{Department of Mathematical Sciences, Indian Institute of Science Education and Research Mohali}
\email{psardar@iisermohali.ac.in}


\author{Rana Sardar}
\address{Department of Mathematical Sciences, Indian Institute of Science Education and Research Mohali}
\email{ranasardar@iisermohali.ac.in}

\thanks{The third author was supported by RA/Postdoctoral Fellowship \#IISER/25/Dean Faculty/2400} 


\subjclass[2020]{20F65, 20F67}

\date{}


\keywords{Morse boundary, Contracting boundary, compactness, Relatively hyperbolic groups}

\begin{abstract}
We prove the following boundary-theoretic characterization of relatively hyperbolic groups. Let $G$ be a finitely generated group with a finite collection $\mathcal{H}$ of finitely generated subgroups, and let $G^h$ denote the associated cusped space. We prove that the pair $(G,\mathcal{H})$ is non-elementary relatively hyperbolic if and only if the Morse boundary $\partial_M^{\mathcal{DL}} G^h$ or the contracting boundary $\partial_c^{\mathcal{FQ}} G^h$ is non-empty and compact. 

\end{abstract}

\maketitle


\section{Introduction}

The classes of hyperbolic and relatively hyperbolic groups play an important role in geometric group theory. The notion of relatively hyperbolic groups was introduced by Gromov and subsequently developed by Farb and Bowditch~\cite{Gro87, Farb, Bowditch}. 
Over the past decades, (relative) hyperbolicity has been studied from several different perspectives. Bowditch~\cite{Bowditch} characterizes relative hyperbolicity via group actions on fine hyperbolic graphs: if a group $G$ acts on a fine hyperbolic graph $K$ with finite edge stabilizers and finitely many edge orbits, and $\mathcal{P}$ is a collection of representatives of conjugacy classes of infinite vertex stabilizers, then $(G,\mathcal{P})$ is relatively hyperbolic. 
Yaman~\cite{Yaman2004} gave a characterization to relatively hyperbolic groups by showing that if a group acts as a convergence group on a perfect metrizable compactum \(M\), where every point is either a conical limit point or a bounded parabolic point, and the stabilizer of each bounded parabolic point is finitely generated, then the group is relatively hyperbolic. 
Gerasimov~\cite{Gerasimov2009} provided another characterization in terms of proper actions on pairs and cocompact actions on triples of a compactum. 
From a coarse geometric viewpoint, Druţu and Sapir~\cite{DrutuSapir2005}  obtain a characterization of relatively hyperbolic groups in terms of their asymptotic cones: such groups are relatively hyperbolic if and only if their asymptotic cones are tree-graded spaces with respect to \(\omega\)-limits of sequences of cosets of the subgroups H.
These results together provide a diverse collection of criteria for detecting relative hyperbolicity via boundary dynamics, asymptotic geometry, and group-theoretic properties.

In this paper, we provide a new characterization of relative hyperbolicity in terms of Morse and contracting boundaries.

Boundaries at infinity play a fundamental role in the study of finitely generated groups and their large-scale geometry. 
For word-hyperbolic groups, the Gromov boundary provides a powerful quasi-isometry invariant that encodes substantial geometric and dynamical information. 
In the broader setting of relatively hyperbolic groups, several boundary constructions and structural criteria have been developed. 
Beyond the hyperbolic setting, several quasi-isometric invariant boundary constructions capturing hyperbolic-like directions in general proper geodesic metric spaces have been introduced. 
A classical obstruction to naive boundary constructions is provided by the example of Croke and Kleiner~\cite{CrokeKleiner2000}, later strengthened by Wilson~\cite{Wilson2005}, showing that visual boundaries of CAT(0) spaces need not be quasi-isometry invariants. 
To address this issue, Charney and Sultan~\cite{CS2015} introduced the \emph{contracting boundary}, denoted by \(\partial_c X\), and Cordes~\cite{cordes2017} subsequently defined the \emph{Morse boundary}, denoted by \(\partial_M X\), for proper geodesic metric spaces \(X\). 
These boundaries isolate geodesic rays exhibiting hyperbolic-like behaviour and are invariant under quasi-isometries. 
As sets, the contracting and Morse boundaries coincide~\cite[Theorem~2.2]{CM2019}. 

Different natural quasi-isometric invariant topologies on these boundaries have been studied. 
In particular, the direct limit topology, denoted by \(\mathcal{DL}\), for Morse boundary \cite{CS2015,cordes2017}.  On the other hand, the fellow-travelling quasigeodesic topology, denoted by \(\mathcal{FQ}\), for contracting boundary introduced by Cashen and Mackay~\cite{CM2019}. This topologies provide useful frameworks for analysing their structure. 
An important consequence of this theory is a purely boundary-theoretic characterization of word-hyperbolic groups: compactness of the Morse boundary \(\partial_M^\mathcal{DL} G\) or the contracting boundary \(\partial_c^\mathcal{FQ} G\) characterizes hyperbolicity~\cite{CordesDurham2019,CM2019} of a finitely generated group \(G\).

The aim of this article is to extend this boundary-theoretic characterization to the setting of relatively hyperbolic groups. More precisely, we show that relative hyperbolicity can also be detected through compactness of either a Morse or a contracting boundary. In this sense, our result contributes a new intrinsic boundary characterization to the existing list of criteria for relative hyperbolicity. 

Let $G$ be a finitely generated group together with a finite collection $\mathcal{H}$ of finitely generated subgroups, and let $G^h$ denote the associated cusped space. 
In~\cite{palpandey2024}, Pal and Pandey proved that if the contracting boundary $\partial_c^{\mathcal{FQ}} G^h$ is compact and the combinatorial horoballs in $G^h$ are contracting, then $(G,\mathcal{H})$ is relatively hyperbolic. 

In the present work, we remove the additional hypothesis that the combinatorial horoballs are contracting, and prove that compactness of the Morse or contracting boundary of the cusped space alone characterizes relative hyperbolicity. This provides a sharp and intrinsic boundary-theoretic criterion for relative hyperbolicity. Our main result is the following.

\begin{theorem}\label{thm_1}
    Suppose  $G$ is a finitely generated group that is not virtually cyclic and  $\mathcal{H}$ is a collection of finitely generated, infinite index, infinite subgroups of $G$. Let $G^h$ denote the cusped space associated to the pair $(G,\mathcal{H})$. Then the following are equivalent:
    \begin{enumerate}
        \item The pair $(G,\mathcal{H})$ is relatively hyperbolic;
        \item The Morse boundary $\partial_M^{\mathcal{DL}} G^h$ is non-empty compact;
        \item The contracting boundary $\partial_c^{\mathcal{FQ}} G^h$ is non-empty compact.
    \end{enumerate}
\end{theorem}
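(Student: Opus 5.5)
We will prove $(1)\Rightarrow(2),(3)$ directly, and then the converses by reducing to the hyperbolic characterization of Cordes--Durham and Cashen--Mackay applied to the cusped space $G^h$.

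\textbf{Step 1: $(1)\Rightarrow(2),(3)$.} If $(G,\mathcal{H})$ is relatively hyperbolic, then by Groves--Manning the cusped space $G^h$ is a proper Gromov hyperbolic geodesic space. In a proper hyperbolic space every geodesic ray is uniformly Morse (and uniformly contracting). Hence both $\partial_M^{\mathcal{DL}} G^h$ and $\partial_c^{\mathcal{FQ}} G^h$ coincide, as topological spaces, with the Gromov boundary $\partial G^h$. This uses the fact that the direct limit stabilizes at a single stratum, and that on a hyperbolic space the $\mathcal{FQ}$ topology agrees with the visual topology. The Gromov boundary of a proper hyperbolic space is compact. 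For non-emptiness: since $G$ is not virtually cyclic and the peripheral subgroups have infinite index, the pair is non-elementary. So $G$ contains a loxodromic element acting on $G^h$, whose axis yields a boundary point.

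\textbf{Step 2: $(2)\Rightarrow(1)$ and $(3)\Rightarrow(1)$.} The cusped space $G^h$ is not a Cayley graph, so the group-theoretic theorems do not apply verbatim. The route we would take is to prove that compactness and non-emptiness force $G^h$ to be hyperbolic; then Groves--Manning's converse gives (1).

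The argument of Cordes--Durham and Cashen--Mackay runs by contradiction. If $X$ is not hyperbolic, one produces a sequence of boundary points in strata of unbounded Morse gauge (respectively, unbounded contraction) that accumulate on no boundary point. This requires two ingredients:
\begin{itemize}
\item[(a)] a cocompact isometric action, to translate non-hyperbolic configurations (fat triangles) near a fixed basepoint;
\item[(b)] a Morse ray, to build bi-infinite Morse geodesics passing close to those configurations.
\end{itemize}
Here $G$ acts on $G^h$ cocompactly only on the thick part $G^h$ minus horoballs, not on all of $G^h$. The combinatorial horoballs are uniformly hyperbolic, since they are quasi-isometric to horoballs in $\mathbb{H}^n$-like spaces. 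So any failure of hyperbolicity must be witnessed by fat geodesic triangles whose fatness is detected within a bounded neighbourhood of the Cayley graph part.

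Concretely, we would proceed as follows. Using non-emptiness, take a Morse ray $\gamma$. Using the $G$-action and a fat triangle near the orbit of the basepoint, concatenate: go along a translate of $\gamma$, through the fat region, and out along another translate. This yields Morse rays $\gamma_n$ from the basepoint whose Morse gauges blow up. Compactness in $\mathcal{DL}$ forces all the $\gamma_n$ into a single stratum $\partial_M^N$, since compact subsets of a direct limit of such strata lie in one stratum (Cordes--Durham). This contradicts the blow-up. For $\mathcal{FQ}$ we would instead use the Cashen--Mackay criterion: points with worsening contraction have no convergent subsequence. Compactness then gives a contradiction, or else forces the boundary to be empty or hyperbolic.

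\textbf{Main obstacle.} The hardest step is handling fat triangles that penetrate deeply into horoballs. This is exactly where Pal--Pandey needed contracting horoballs. We would first show that thin-triangle failure in $G^h$ can be pushed to bounded depth: a geodesic entering a horoball to depth $d$ spends only $O(\log)$ of its length there, and horoballs are hyperbolic. Hence a triangle that is $\delta$-fat for large $\delta$ has a fat point within bounded distance of $G$. We would also verify that $\gamma$ and its translates can be chosen to avoid deep horoball excursions, so that the concatenations remain Morse with controlled, but growing, gauge.
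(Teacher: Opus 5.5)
Your Step~1 is fine and matches what the paper treats as immediate. The gap is in Step~2, at exactly the point you flag as the main obstacle.

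The claim ``a triangle that is $\delta$-fat for large $\delta$ has a fat point within bounded distance of $G$'' is false as stated, and the reasoning offered for it is wrong. A geodesic reaching depth $d$ in a horoball spends about $2d$ of its length there; only its horizontal displacement is exponential in $d$. Also, the intrinsic hyperbolicity of each horoball says nothing about how the horoballs sit inside $G^h$.

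For a counterexample, take $P=\langle b,c\rangle\cong\mathbb{Z}^2$, $G=P*\langle a\rangle$ and $\mathcal{H}=\{P,\langle b\rangle\}$. The geodesics from $1$ to $b^{2^k}$ through the horoball on $P$ and through the horoball on $\langle b\rangle$ have the same length, roughly $2k$. Together they form a geodesic bigon that is roughly $k$-fat at depth about $k$, while every point of it at depth at most $R$ lies within $R$ of the other side. The failure of hyperbolicity here comes from vertical rays that are not Morse (equivalently, not contracting) in $G^h$. That is exactly the Pal--Pandey hypothesis you are trying to remove, and nothing in your sketch addresses it.

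Independently, you give no argument that concatenating translates of $\gamma$ with a path through a fat region yields a quasi-geodesic, let alone a Morse ray with controlled gauge. That is the substance of the proof, not a detail.

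Finally, the $\mathcal{FQ}$ criterion you quote is false. In the paper's remark, the rays $a^kb^kaaa\cdots$ in $\mathbb{Z}*\mathbb{Z}^2$ have worsening contraction yet converge to $aaa\cdots$ in $\partial_c^{\mathcal{FQ}}$. Compactness in $\mathcal{FQ}$ yields a contradiction only when the bad region stays near the basepoint, so that any limit would be a non-contracting ray.

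The missing idea is the paper's Theorem~\ref{thm_all_vertical_rays_are_contracting}: compactness forces every vertical ray to be contracting. For a Cayley graph, the weak hull of a compact boundary is hyperbolic (Proposition~\ref{prop_weakhull_hyperbolic}) and $G$-invariant, hence coarsely dense. In $G^h$ it still coarsely contains the Cayley graph, so the whole issue is whether it fills the horoballs.

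The paper argues as follows. Suppose bi-infinite geodesics with endpoints in $\partial_c G^h$ went arbitrarily deep into horoballs on cosets of some $H_j$ whose vertical rays are not contracting. Translating by elements of $G$ then gives geodesic rays $[1,a_k)$, with $a_k\in\partial_c G^h$, that follow the vertical ray of $H_j^h$ at $1$ for longer and longer. By compactness $a_k\to a$, and the rays converge on compacta to that vertical ray, which would then represent $a$ and be contracting---a contradiction. Here the bad region sits at the basepoint, which is what makes $\mathcal{FQ}$ compactness effective.

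After disposing of the remaining peripheral subgroups, all horoballs are contracting, and Theorem~\ref{thm_pal_pandey} gives relative hyperbolicity. The $\mathcal{DL}$ case follows because the identity $\partial_M^{\mathcal{DL}}G^h\to\partial_c^{\mathcal{FQ}}G^h$ is continuous. Your plan has no substitute for this step.
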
 

Note that the non-emptiness assumption in Theorem~\ref{thm_1} is necessary. For example, if $G=\mathbb{Z}^2$ and $H=\mathbb{Z}$, then $G$ is not hyperbolic relative to $H$, while $\partial_M G^h$ is empty and hence compact. The contracting boundary of $G^h$ consists of exactly one point if and only if $\mathcal{H}=\{G\}$. In this case $G$ is relatively hyperbolic with respect to itself.
The contracting boundary $\partial_c G^h$ is compact and consists of exactly two points if and only if the group \(G\) is virtually cyclic and the collection  $\mathcal{H}$ is empty. Consequently, in proving Theorem~\ref{thm_1} we may assume that every subgroup in $\mathcal{H}$ is of infinite index in $G$ and that the Morse (equivalently, contracting) boundary contains at least three points.




\section{Morse Boundary and Contracting Boundary}

In this section, we recall the notions of the Morse boundary and the contracting boundary of a geodesic metric space. These boundaries capture hyperbolic–like directions in spaces that may not be globally hyperbolic. 

\subsection{Morse Boundary}

\begin{definition}[Morse (quasi) geodesic]
    A (quasi) geodesic $\gamma$ in a geodesic metric space $X$ is called \emph{$N$-Morse} if there exists a function (not necessarily continuous or non-decreasing) \(N: \mathbb{R}_{\ge 1} \times \mathbb{R}_{\ge 0} \to \mathbb{R}_{\ge 0}\) such that whenever $\alpha$ is a $(K,C)$–quasi-geodesic with endpoints on $\gamma$, the image of $\alpha$ is contained in the $N(K,C)$–neighborhood of $\gamma$. The function $N$ is called the \emph{Morse gauge} of $\gamma$.
\end{definition}

Intuitively, Morse geodesics are geodesics that remain stable under quasi-geodesic perturbations. In hyperbolic spaces, every geodesic is Morse with a uniform gauge. In contrast, Euclidean spaces contain no Morse geodesic rays. Morse quasi-geodesics appear naturally in many geometric contexts, including quasi-geodesics in hyperbolic spaces and axes of pseudo-Anosov mapping classes in Teichm\"uller space~\cite{Minsky1996}.

A key feature of Morse geodesics is their stability with respect to quasi-geodesics with the same endpoints.

\begin{lemma}[Morse Quasi-Geodesic Stability, {\cite[Lemma~2.1]{cordes2017}}] \label{lem_Morse_quasi-geodesic_stability}
    Let $X$ be a geodesic metric space and let $\alpha \colon [0,l_1] \to X$ be an $N$–Morse geodesic segment. Let $\beta \colon [0,l_2] \to X$ be a $(\lambda,\varepsilon)$–quasi-geodesic such that $\alpha(0)=\beta(0)$ and $\alpha(l_1)=\beta(l_2)$. Then the Hausdorff distance between $\alpha$ and $\beta$ is bounded by \(2N(\lambda,\varepsilon') + (\lambda+\varepsilon)\), where $\varepsilon' = 2(\lambda+\varepsilon)$. If $\beta$ is continuous, then the Hausdorff distance is bounded by $2N(\lambda,\varepsilon)$.
\end{lemma}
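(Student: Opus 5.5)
We prove the two Hausdorff bounds in turn: first that $\beta$ lies near $\alpha$, directly from the Morse property, and then the converse inclusion by a connectedness argument.

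\textbf{Continuous case, $\beta\subset N_{N(\lambda,\varepsilon)}(\alpha)$.} This is immediate from the definition of an $N$-Morse geodesic, since $\beta$ is a $(\lambda,\varepsilon)$-quasi-geodesic with endpoints on $\alpha$.

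\textbf{Continuous case, $\alpha\subset N_{2N(\lambda,\varepsilon)}(\beta)$.} Put $R=N(\lambda,\varepsilon)$ and fix $t\in[0,l_1]$. Consider the closed sets
\[
A=\{s\in[0,l_2] : d(\beta(s),\alpha([0,t]))\le R\},\qquad B=\{s\in[0,l_2] : d(\beta(s),\alpha([t,l_1]))\le R\}.
\]
By the first step, $A\cup B=[0,l_2]$. Moreover $0\in A$ and $l_2\in B$, and both sets are closed because $\beta$ is continuous and $\alpha$ is compact. Connectedness of $[0,l_2]$ then gives some $s\in A\cap B$. Hence there are $t_1\le t\le t_2$ with
\[
d(\beta(s),\alpha(t_1))\le R,\qquad d(\beta(s),\alpha(t_2))\le R,
\]
so $t_2-t_1=d(\alpha(t_1),\alpha(t_2))\le 2R$. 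This gives $d(\alpha(t),\beta(s))\le R+\min(t-t_1,t_2-t)\le 2R$. A slightly careless version of this estimate yields $2R$ at worst, which matches the stated bound $2N(\lambda,\varepsilon)$.

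\textbf{General case.} Replace $\beta$ by the continuous path $\beta'$ obtained by connecting $\beta(n)$ to $\beta(n+1)$, for consecutive integers $n$ (and the final point $l_2$), by geodesic segments. We check that $\beta'$ is a $(\lambda,\varepsilon')$-quasi-geodesic with $\varepsilon'=2(\lambda+\varepsilon)$, and that $\beta$ and $\beta'$ are within Hausdorff distance $\lambda+\varepsilon$ of each other. Each inserted segment has length at most $\lambda+\varepsilon$, so every point of $\beta'$ lies within $\lambda+\varepsilon$ of $\beta$, and conversely every point of $\beta$ lies within $\lambda+\varepsilon$ of some $\beta(n)$ on $\beta'$. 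The quasi-geodesic constants of $\beta'$ follow by comparing parameters to the nearest integers; each comparison costs at most one segment length on each side, which accounts for the additive constant $2(\lambda+\varepsilon)$. Applying the continuous case to $\beta'$ and combining with the triangle inequality gives the bound $2N(\lambda,\varepsilon')+(\lambda+\varepsilon)$.

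The main obstacle is this constant bookkeeping: checking that the piecewise-geodesic tame-up really has additive constant $2(\lambda+\varepsilon)$, including the endpoint segments of non-integer length. The connectedness step is the conceptual heart of the argument but is routine.
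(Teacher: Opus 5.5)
Your proof is correct and follows essentially the argument behind the cited result (Cordes, Lemma~2.1). The continuous case goes via the Morse property plus connectedness of $[0,l_2]$, and the general case tames $\beta$ as in Bridson--Haefliger's Lemma~III.H.1.11 and then uses the triangle inequality for Hausdorff distance.

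The one point to tighten is the bookkeeping you flagged yourself. Getting exactly $\varepsilon'=2(\lambda+\varepsilon)$ matters, since $N$ is not assumed monotone. To achieve it, parametrize each inserted geodesic at constant speed and compare $t$ with the \emph{nearest} point of $\{0,l_2\}\cup(\mathbb{Z}\cap(0,l_2))$. This costs at most $1/2$ in parameter and $(\lambda+\varepsilon)/2$ in distance at each end, and the lower bound also uses $1/\lambda\le\lambda$. Charging a full segment length at each end, as your sketch suggests, would only give $3(\lambda+\varepsilon)$.
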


Another important property is that triangles formed by Morse geodesics are slim.

\begin{lemma}[{\cite[Lemma~2.2]{cordes2017}}] \label{lem_4N_slim_triangle}
    Let $X$ be a geodesic metric space and let $\alpha_1 \colon [0,l_1] \to X$ and $\alpha_2 \colon [0,l_2] \to X$ be $N$–Morse geodesics such that $\alpha_1(0)=\alpha_2(0)=p$. Let $\gamma$ be a geodesic joining $\alpha_1(l_1)$ and $\alpha_2(l_2)$. Then the geodesic triangle $\alpha_1 \cup \alpha_2 \cup \gamma$ is $4N(3,0)$–slim.
\end{lemma}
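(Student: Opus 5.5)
The plan is to reduce to Lemma~\ref{lem_Morse_quasi-geodesic_stability} applied to a suitable concatenated path. Label the triangle $\alpha_1\cup\alpha_2\cup\gamma$ with $x=\alpha_1(l_1)$ and $y=\alpha_2(l_2)$. We must show that each side lies in the $4N(3,0)$–neighbourhood of the union of the other two.

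\emph{Points of $\alpha_1$ and $\alpha_2$.} Consider $\gamma$ first. Choose $z$ on $\gamma$ (say a point of $\gamma$) and consider the triangle. The cleanest route is as follows.
\begin{enumerate}
\item Pick a point $q\in\alpha_1$ (or in $\alpha_2$) closest to the other configuration. More concretely, use the standard ``tripod'' choice: let $w\in\gamma$ be a point with $d(w,\alpha_1)$ and $d(w,\alpha_2)$ balanced.
\item Form the concatenation $\beta=[p,\pi_1]\cup[\pi_1,w]\cup[w,y]$, where $\pi_1$ is a nearest point of $\alpha_1$ to $w$. Check that $\beta$ is a $(3,0)$–quasi-geodesic. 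This is the usual fact that a geodesic followed by a shortest path to it, followed by a geodesic, is a $(3,0)$–quasi-geodesic, proved by the triangle inequality using the nearest-point property.
\item Since $\beta$ has endpoints $p$ and $y$ on the $N$–Morse geodesic $\alpha_2$, it lies in the $N(3,0)$–neighbourhood of $\alpha_2$.
\end{enumerate}

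In fact the simplest argument, which I would use, is more direct and works for each point separately. Take $u\in\gamma$ and let $\pi\in\alpha_1$ be a closest point to $u$. The path $[p,\pi]_{\alpha_1}\cup[\pi,u]\cup[u,y]_\gamma$ is continuous and a $(3,0)$–quasi-geodesic by the nearest-point estimate. Its endpoints lie on $\alpha_2$, so $u$ is within $N(3,0)$ of $\alpha_2$. Hence $\gamma\subset N_{N(3,0)}(\alpha_2)$, and in particular $\gamma$ lies in the $N(3,0)$–neighbourhood of $\alpha_1\cup\alpha_2$.

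\emph{Points of $\alpha_1$ and $\alpha_2$.} For a point $v\in\alpha_1$, use the reverse symmetric argument. Run along $\gamma$ from $x$ to $y$. The function $t\mapsto d(\gamma(t),\alpha_1)-d(\gamma(t),\alpha_2)$ changes sign, so by continuity there is a point $w\in\gamma$ within $N(3,0)$ of both $\alpha_1$ and $\alpha_2$, say $d(w,a_1),d(w,a_2)\le N(3,0)$ with $a_i\in\alpha_i$. Then $d(a_1,a_2)\le 2N(3,0)$. Concatenate $[p,a_2]_{\alpha_2}\cup[a_2,a_1]$. Since $\alpha_1$ is geodesic and Morse, comparing the geodesic subsegment $[p,a_1]\subset\alpha_1$ with this path shows that points of $[p,a_1]$ are within $2N(3,0)+$ (Morse constant) of $\alpha_2$, by applying Lemma~\ref{lem_Morse_quasi-geodesic_stability} or directly Morseness to $[p,a_2]\cup[a_2,a_1]$, which is a $(1,4N(3,0))$-type path. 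To stay with gauge $N(3,0)$, I would instead apply Morseness of $\alpha_2$ to the geodesic $[p,a_1]_{\alpha_1}\cup$ short segment, arranged as a $(3,0)$–quasi-geodesic via a nearest-point choice. Points of $[a_1,x]$ are within $2N(3,0)$ of $[w,x]_\gamma$ by the same argument applied to the subtriangle. Altogether the bound is at most $4N(3,0)$.

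The main obstacle is the bookkeeping of the constants. One must verify the $(3,0)$ quasi-geodesic claim for nearest-point concatenations, and check that every step uses the gauge only at $(3,0)$, so that the total is exactly $4N(3,0)$ rather than an expression involving other gauge values.
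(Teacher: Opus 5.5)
There is a genuine gap, and it sits in the step everything else rests on. You assert the following: for an arbitrary $u\in\gamma$, with $\pi$ a closest point of $\alpha_1$ to $u$, the path $[p,\pi]_{\alpha_1}\cup[\pi,u]\cup[u,y]_\gamma$ is a $(3,0)$-quasi-geodesic ``by the nearest-point estimate''. That estimate only covers a two-piece path $[q,z]\cup[z,w]$ in which $z$ is a closest point to $q$ on a geodesic containing $[z,w]$. The junction at $u$ is not of this type, because $u$ is not chosen as a closest point of $\gamma$ to anything.

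The claim is in fact false. In a tree the triangle is a tripod with centre $c$; take $u\in\gamma$ on the leg $[c,x]$ with $d(u,c)=L$. Then $u\in\alpha_1$, so $\pi=u$, and your path runs $p\to c\to u\to c\to y$, returning to $c$ after parameter length $2L$. Your conclusion $\gamma\subset N_{N(3,0)}(\alpha_2)$ fails too, since $d(u,\alpha_2)=L$ is unbounded while $N(3,0)$ is a fixed constant. Your discarded plan in steps 1--3 has the same three-piece defect.

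The second half depends on the first. The balance point $w$ is within $N(3,0)$ of both $\alpha_1$ and $\alpha_2$ only if $\gamma\subset N_{N(3,0)}(\alpha_1\cup\alpha_2)$ is already known. It also has its own unresolved steps:
\begin{itemize}
\item The ``nearest-point choice'' that makes $[p,a_1]_{\alpha_1}$ plus a short segment a $(3,0)$-quasi-geodesic is never specified.
\item In the subtriangle $a_1,w,x$ the points are chosen in the wrong order. You took $a_1$ near $w$, but $w$ is not a closest point of $\gamma$ to $a_1$, so $[a_1,w]\cup[w,x]_\gamma$ is not known to be a $(3,0)$-quasi-geodesic.
\end{itemize}

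The missing idea is to place the junction at the projection of the common vertex onto the third side. Let $q$ be a closest point of $\gamma$ to $p$. For $i=1,2$, the path $\beta_i=[p,q]\cup[q,\alpha_i(l_i)]_\gamma$ is then a continuous $(3,0)$-quasi-geodesic by exactly the nearest-point estimate, since for $a\in[p,q]$ one has $d(a,\gamma)=d(a,q)$. Both endpoints of $\beta_i$ lie on $\alpha_i$.
\begin{itemize}
\item Morseness of $\alpha_i$ gives $\beta_i\subset N_{N(3,0)}(\alpha_i)$. Hence $\gamma\subset N_{N(3,0)}(\alpha_1\cup\alpha_2)$, and $[p,q]\subset N_{N(3,0)}(\alpha_j)$ for both $j$.
\item The continuous case of Lemma~\ref{lem_Morse_quasi-geodesic_stability} gives $\alpha_i\subset N_{2N(3,0)}(\beta_i)$.
\item So every point of $\alpha_i$ is within $2N(3,0)$ of $\gamma$ or within $3N(3,0)$ of $\alpha_j$ for $j\neq i$.
\end{itemize}
This is the standard proof of the cited result; adding the two Hausdorff bounds crudely gives the stated $4N(3,0)$. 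It uses only the gauge value $N(3,0)$ and needs no intermediate-value step.
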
 

We now describe the construction of the Morse boundary. Fix a basepoint $p\in X$. For a given Morse gauge $N$, consider the subset of the Morse boundary consisting of rays with Morse gauge bounded by $N$:
\[
\partial_M^{N} X_p
=
\left\{
[\alpha] \mid
\exists \beta \in [\alpha] \text{ such that } 
\beta \text{ is an } N\text{–Morse geodesic ray with } \beta(0)=p
\right\}.
\]
The topology on $\partial_M^N X_p$ is defined using a neighborhood basis given by fellow traveling of geodesic rays.

\begin{lemma}[{\cite[Lemma~3.3]{cordes2017}, \cite[Definition~2.9]{Cordes2024}}]
    Let $\alpha$ be an $N$–Morse geodesic ray with $\alpha(0)=p$. For each positive integer $n$, let $V_n(\alpha)$ be the set of geodesic rays $\gamma$ such that
    \[
    \gamma(0)=p \quad \text{and} \quad d(\alpha(t),\gamma(t)) < \delta_N \quad \text{for all } t<n,
    \]
    where $\delta_N > 4N(3,0)$. Then $\{V_n(\alpha)\}_{n\in\mathbb N}$ forms a fundamental system of neighborhoods of $[\alpha]$ in $\partial_M^N X_p$.
\end{lemma}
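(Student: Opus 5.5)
The plan is to verify that $\{V_n(\alpha)\}_{n\in\mathbb N}$ (more precisely, the set of classes $[\gamma]$ with $\gamma\in V_n(\alpha)$ and $[\gamma]\in\partial_M^N X_p$) satisfies the axioms of a neighborhood system. Each $V_n(\alpha)$ contains $[\alpha]$, and $V_{n+1}(\alpha)\subseteq V_n(\alpha)$, so intersections of two basic sets contain a basic set. The only substantial axiom is the "openness" condition. For every $n$ and every $[\gamma]\in V_n(\alpha)$, we must find $m$ with $V_m(\gamma)\subseteq V_n(\alpha)$. We must also check that the definition does not depend on the chosen representative $\alpha$ of the class.

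The key input is a uniform fellow-travelling estimate for $N$-Morse rays from $p$. We first show that if $\alpha,\beta$ are $N$-Morse rays based at $p$ with $[\alpha]=[\beta]$, then $d(\alpha(t),\beta(t))\le C_N$ for all $t$, for a constant $C_N$ depending only on $N$; one can take $C_N=\delta_N$-type, namely of order $4N(3,0)$. For this, take far points $\alpha(T)$ and $\beta(T')$ at bounded distance and apply Lemma~\ref{lem_4N_slim_triangle} to the triangle they span with $p$. Slimness gives that $\alpha(t)$ is $4N(3,0)$-close to $\beta$ (the third side is short). Comparing distances to $p$ then converts closeness to a point $\beta(s)$ into $|s-t|$ bounded, and hence into $d(\alpha(t),\beta(t))\le 8N(3,0)$. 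The same triangle argument shows that if two $N$-Morse rays are $\delta_N$-close at time $n$, then they are $C'_N$-close on $[0,n]$.

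For openness, given $\gamma\in V_n(\alpha)$, choose $m$ much larger than $n$. Take $\eta\in V_m(\gamma)$. Then $d(\eta(t),\alpha(t))\le d(\eta(t),\gamma(t))+d(\gamma(t),\alpha(t))<2\delta_N$ for $t<n$, which is too weak. The fix is to exploit the slack in $\delta_N>4N(3,0)$. Consider the geodesic triangle with vertices $p$, $\eta(m')$ and $\alpha(m')$ for some $m'\le m$ with $m'\gg n$. The third side $[\eta(m'),\alpha(m')]$ has length less than $2\delta_N$, so it lies far from the initial segment of length $n$. By Lemma~\ref{lem_4N_slim_triangle}, for $t<n$ the point $\eta(t)$ is within $4N(3,0)$ of $\alpha$, not of the third side. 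The distance comparison from $p$ then gives $d(\eta(t),\alpha(t))\le 8N(3,0)$. To land strictly below $\delta_N$, I expect to need the sharper observation that $\eta(t)$ is $4N(3,0)$-close to some $\alpha(s)$ with $|s-t|$ small. Alternatively, I would define the neighborhood with the constant $\delta_N$ chosen so that these uniform bounds fit, e.g.\ $\delta_N>8N(3,0)$, or cite \cite[Definition~2.9]{Cordes2024}, where the constant is normalized.

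I expect this constant bookkeeping, getting the strict bound $<\delta_N$ rather than a multiple of it, to be the main obstacle. If it fails, I would instead prove that the topology generated is unchanged when $\delta_N$ is replaced by any larger constant. That suffices for the family to be a fundamental system: every $V$-set with constant $\delta$ contains one with constant $\delta'$ after increasing $n$, by the uniform triangle estimate above. Representative independence follows in the same way from the $C_N$-fellow-travelling of equivalent rays.
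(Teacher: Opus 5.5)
There is a genuine gap. It sits at the step you flag yourself, and the argument you give for that step is also flawed. From $[\gamma]\in V_n(\alpha)$ you know nothing about $d(\gamma(t),\alpha(t))$ for $t\ge n$. So for $m'\gg n$ the side $[\eta(m'),\alpha(m')]$ of your triangle need not be short: $\eta$ tracks $\gamma$ up to time $m$, but $\gamma$ may leave $\alpha$ right after time $n$. The strong openness you aim for ($V_m(\gamma)\subseteq V_n(\alpha)$ for \emph{every} $[\gamma]\in V_n(\alpha)$) cannot be obtained this way. When $\sup_{t<n}d(\alpha(t),\gamma(t))=\delta_N$ there is no slack near $t=n$, and neither the triangle inequality nor slimness creates any. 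The neighbourhood-system axiom asks for less: for each $n$ there should be an $n'$ such that every $[\gamma]\in V_{n'}(\alpha)$ admits some $V_m(\gamma)\subseteq V_n(\alpha)$. With $n'\gg n$ and $m\ge n'$ your triangle becomes legitimate if you use the vertices $p,\eta(n'-1),\alpha(n'-1)$, whose third side is shorter than $2\delta_N$. Even then this needs $\eta$ to be $N$-Morse. Elements of $V_m(\gamma)$ are arbitrary geodesic rays from $p$, so Lemma~\ref{lem_4N_slim_triangle} does not apply to them directly, and passing to $N$-Morse representatives costs a further uniform constant.

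Even after this repair, the bound you get is $d(\eta(t),\alpha(t))\le 8N(3,0)$ for $t<n$. Slimness places $\eta(t)$ within $4N(3,0)$ of some $\alpha(s)$, and comparing distances to $p$ gives only $|s-t|\le 4N(3,0)$. Nothing in Lemma~\ref{lem_4N_slim_triangle} makes $|s-t|$ small, so the ``sharper observation'' you hope for is not available. Since only $\delta_N>4N(3,0)$ is assumed, you do not get $<\delta_N$ in the range $4N(3,0)<\delta_N\le 8N(3,0)$. Your fallbacks do not close this:
\begin{itemize}
\item Imposing $\delta_N>8N(3,0)$ proves a different statement, and citing the source is not a proof.
\item The ``enlarge $\delta_N$'' route is circular. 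Write $V^{\delta}_n(\alpha)$ for the set defined with constant $\delta$. Showing $V^{\delta'}_m(\alpha)\subseteq V^{\delta}_n(\alpha)$ for $\delta<\delta'$ uses the same fellow-travelling estimate, so it works only when $\delta>8N(3,0)$.
\item Your representative-independence claim has the same problem, since equivalent $N$-Morse rays are only shown to be about $8N(3,0)$-close.
\end{itemize}
The missing ingredient is a fellow-travelling bound for $N$-Morse rays from $p$ that lies strictly below $\delta_N$. The paper gives no argument here either; it only cites the literature. Still, as a proof of the lemma as stated, your proposal is incomplete.
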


The spaces $\partial_M^N X_p$ naturally fit into a directed system as the Morse gauge increases.

\begin{lemma}[{\cite[Lemma~2.7]{Cordes2024}}]
    Let $N$ and $N'$ be Morse gauges such that every $N$-Morse geodesic is also $N'$-Morse. Then the natural inclusion \(i : \partial_M^{N} X_p \to \partial_M^{N'} X_p\) is continuous. In particular, this holds whenever $N \le N'$.
\end{lemma}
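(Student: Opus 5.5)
The plan is to prove continuity of the inclusion $i:\partial_M^N X_p \to \partial_M^{N'} X_p$ pointwise, using the neighborhood bases $\{V_n(\alpha)\}$ from the preceding lemma. The underlying sets are compatible. If $[\alpha]\in\partial_M^N X_p$ has an $N$-Morse representative $\beta$ with $\beta(0)=p$, then $\beta$ is also $N'$-Morse, so $[\alpha]\in\partial_M^{N'}X_p$. Thus $i$ is well defined and injective. When $N\le N'$ pointwise this hypothesis holds automatically, since the neighborhood $N(K,C)$ of $\gamma$ is contained in the $N'(K,C)$-neighborhood. So the ``in particular'' clause is immediate.

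For continuity, fix $[\alpha]\in\partial_M^N X_p$ with $\alpha$ an $N$-Morse ray from $p$. Take a basic neighborhood $V'_m(\alpha)$ in $\partial_M^{N'}X_p$, defined with constant $\delta_{N'}>4N'(3,0)$. It suffices to find $n$ such that every $N$-Morse ray $\gamma$ from $p$ lying in $V_n(\alpha)$ (constant $\delta_N$) lies in $V'_m(\alpha)$.

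The key step is a fellow-travelling upgrade, and this is the main obstacle. The constants $\delta_N$ and $\delta_{N'}$ differ, and there is no assumption that $\delta_N\le\delta_{N'}$. We only know $N'(3,0)$ dominates what is needed for $N'$-Morse rays, not that $N(3,0)\le N'(3,0)$ in the general case.

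The step uses the thin-triangle estimate of Lemma~\ref{lem_4N_slim_triangle}. Suppose $\gamma,\alpha$ are $N$-Morse rays from $p$ with $d(\alpha(t),\gamma(t))<\delta_N$ for $t<n$. Consider the triangle with vertices $p$, $\alpha(n')$, $\gamma(n')$ for $n'$ slightly less than $n$. Its third side has length less than $\delta_N$. By $4N(3,0)$-slimness, and since the two sides are geodesics from a common point whose endpoints are close, a standard argument gives $d(\alpha(t),\gamma(t))\le 4N(3,0)+$ a small error for all $t\le n'-\delta_N-4N(3,0)$. The point is to compare a point on $\alpha$ with its nearby point on $\gamma$ and use the triangle inequality on the parameters.

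Taking $n$ large compared with $m$, for instance $n=m+2\delta_N+8N(3,0)$, we get the following. For $t<m$ the distance is bounded by roughly $4N(3,0)$, which is not yet less than $\delta_{N'}$.

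To finish, I would use that $\alpha$ and $\gamma$ are also $N'$-Morse, and apply Lemma~\ref{lem_4N_slim_triangle} with gauge $N'$ to the same triangle. This gives $d(\alpha(t),\gamma(t))\le 4N'(3,0)+\epsilon<\delta_{N'}$ for $t<m$, once the short side is controlled. If the comparison still leaves an additive constant, I would absorb it by choosing $n$ larger. Alternatively I would invoke the independence of the topology from the choice of $\delta$ in \cite[Definition~2.9]{Cordes2024}, which lets one shrink to any $\delta>4N'(3,0)$.

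Then $i(V_n(\alpha))\subseteq V'_m(\alpha)$, so $i$ is continuous at $[\alpha]$.
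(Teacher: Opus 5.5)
Your check that $i$ is well defined and your treatment of the ``in particular'' clause are fine. The fellow-travelling upgrade, however, does not give what you need. From $4N'(3,0)$-slimness of the triangle with vertices $p,\alpha(n'),\gamma(n')$ you only learn that $\alpha(t)$ lies within $4N'(3,0)$ of some point $\gamma(t')$. The triangle inequality then gives $|t-t'|\le 4N'(3,0)$, hence only $d(\alpha(t),\gamma(t))\le 8N'(3,0)$. The second $4N'(3,0)$ is not a ``small error'', and it does not depend on $n$, so choosing $n$ larger cannot absorb it. Since the basis lemma only requires $\delta_{N'}>4N'(3,0)$, you cannot conclude $\gamma\in V'_m(\alpha)$. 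Your fallback also points the wrong way: shrinking $\delta_{N'}$ makes $V'_m(\alpha)$ smaller, so the required containment gets harder, not easier.

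The freedom in $\delta$ is the right tool, but used in the other direction, and then the triangle argument becomes unnecessary. The neighbourhood-basis lemma holds for every $\delta_{N'}>4N'(3,0)$, so choose $\delta_{N'}\ge\delta_N$, e.g.\ $\delta_{N'}=\max\{\delta_N,4N'(3,0)+1\}$. Any $N$-Morse ray $\gamma$ from $p$ in $V_n(\alpha)$ is then, by your hypothesis, an $N'$-Morse ray in $V'_n(\alpha)$. Hence $i(V_n(\alpha))\subseteq V'_n(\alpha)$ with $n=m$.

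Alternatively, there is the quotient argument in Cordes's construction, which the paper only cites without proof. There $\partial_M^N X_p$ carries the quotient of the compact-open topology on the set of $N$-Morse rays from $p$. Under your hypothesis this set is a subspace of the set of $N'$-Morse rays from $p$, so the inclusion of ray spaces is continuous. Composing with the quotient map onto $\partial_M^{N'}X_p$ gives a continuous map that is constant on asymptote classes. The universal property of the quotient then gives continuity of $i$, with no geometric estimate at all.
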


Using these inclusions, we define the Morse boundary as a direct limit.

\begin{definition}[{\cite[Definition~2.9]{Cordes2024}}]
    Let $X$ be a proper geodesic metric space and let $p \in X$. The \emph{Morse boundary} of $X$ is defined by
    \[
    \partial_M X_p = \varinjlim_{N \in \mathcal{M}} \partial_M^N X_p,
    \]
    equipped with the induced direct limit topology $\mathcal{DL}$.
\end{definition}

The Morse boundary enjoys several important geometric properties.

\begin{lemma}[{\cite[Theorem~2.16]{cordes2019}, \cite{cordes2017}}]
Let $X$ be a proper geodesic space. Then the Morse boundary $\partial_M X_p$:

\begin{enumerate}
\item is independent of the choice of basepoint;

\item is a visibility space, i.e.\ any two points can be joined by a bi-infinite Morse geodesic;

\item is invariant under quasi-isometries;

\end{enumerate}
\end{lemma}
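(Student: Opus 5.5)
Since the lemma collects standard facts from \cite{cordes2017,cordes2019}, the plan is to reduce all three items to one basic construction: \emph{straightening a Morse quasi-geodesic ray into a Morse geodesic ray with controlled gauge}. Let $\alpha$ be a ray (or a quasi-geodesic ray) that is $N$–Morse, and fix a point $q$. Take geodesic segments $\gamma_n=[q,\alpha(n)]$. The concatenation $[q,\alpha(0)]\cup\alpha|_{[0,n]}$ is a quasi-geodesic whose constants depend only on $d(q,\alpha(0))$. Hence Lemma~\ref{lem_Morse_quasi-geodesic_stability} keeps $\gamma_n$ uniformly Hausdorff close to it. It also shows that $\gamma_n$ is $N'$–Morse, where $N'$ depends only on $N$ and on $d(q,\alpha(0))$: a quasi-geodesic with endpoints on $\gamma_n$ can be rerouted to have endpoints on $\alpha$ at bounded cost. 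Since $X$ is proper, Arzelà–Ascoli gives a subsequence converging to a geodesic ray $\gamma$ from $q$ at finite Hausdorff distance from $\alpha$. The ray $\gamma$ is again $N'$–Morse, because the Morse condition passes to limits after a bounded enlargement of the gauge.

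\emph{Item (1) and item (3).} For (1), apply the construction with $q$ a second basepoint. This gives a map $\partial_M^N X_p\to\partial_M^{N'}X_q$, which is well defined on classes because asymptotic rays have asymptotic straightenings. For continuity on each stratum, use the neighbourhoods $V_n(\alpha)$. If $\beta$ fellow-travels $\alpha$ within $\delta_N$ up to a large time $n$, then the slim triangle Lemma~\ref{lem_4N_slim_triangle}, applied to the triangles $q,\alpha(t),\beta(t)$, shows that the straightened rays fellow-travel within $\delta_{N'}$ up to time roughly $n-d(p,q)-\mathrm{const}$. Swapping the roles of $p$ and $q$ gives an inverse. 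The universal property of the direct limit then produces a homeomorphism $\partial_M X_p\cong\partial_M X_q$.

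For (3), let $f\colon X\to Y$ be a $(K,C)$–quasi-isometry. Then $f\circ\alpha$ is a quasi-geodesic ray. It is Morse with a gauge depending on $N,K,C$: pull any quasi-geodesic with endpoints on $f\circ\alpha$ back by a quasi-inverse, apply the $N$–Morse property of $\alpha$, and push forward again. Straightening $f\circ\alpha$ from $f(p)$ gives a map of strata $\partial_M^N X_p\to\partial_M^{N''}Y_{f(p)}$. Continuity follows by the same fellow-travelling estimate as in (1). A quasi-inverse supplies the inverse map, and again the maps pass to the direct limits.

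\emph{Item (2).} Take distinct classes represented by $N$–Morse rays $\alpha,\beta$ from $p$, and geodesics $\gamma_n=[\alpha(n),\beta(n)]$.
\begin{enumerate}
\item Since the classes are distinct, $\alpha$ and $\beta$ eventually leave each other's $\delta_N$–neighbourhoods after some time $T$.
\item By the $4N(3,0)$–slimness of the triangles $p,\alpha(n),\beta(n)$, each $\gamma_n$ passes within a bounded distance of $\alpha([0,T])\cup\beta([0,T])$. This is a bounded set independent of $n$.
\item By properness, a subsequence of the $\gamma_n$ converges to a bi-infinite geodesic $\gamma$ whose two ends are asymptotic to $\alpha$ and $\beta$.
\item For the Morse property, use that in a triangle two of whose sides are $N$–Morse, the third side is $N'''$–Morse for a gauge depending only on $N$. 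Then pass to the limit as before.
\end{enumerate}

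The step I expect to be the main obstacle is continuity with respect to the direct limit topology in (1) and (3). The fellow-travelling constant $\delta_N$ must be tracked uniformly through straightening, which forces the loss of a bounded initial segment and an increase of gauge from $N$ to $N'$. One must check that the resulting stratum maps are compatible with the inclusions $\partial_M^N\to\partial_M^{N'}$, so that they actually induce maps on the limit. I would first prove a quantitative lemma: rays that $\delta_N$–fellow-travel up to time $n$ have straightenings that $\delta_{N'}$–fellow-travel up to time $n-c$, with $c$ depending only on $N$, $d(p,q)$, $K$ and $C$. Everything else then follows formally.
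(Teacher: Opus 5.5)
The paper gives no argument for this lemma and simply defers to \cite{cordes2017,cordes2019}. Your outline is essentially the argument of \cite{cordes2017}: straightening Morse quasi-geodesic rays with controlled gauge, checking continuity stratum by stratum and then using the universal property of the direct limit for (1) and (3), and obtaining visibility as an Arzel\`a--Ascoli limit of the segments $[\alpha(n),\beta(n)]$ via slim triangles and the ``two Morse sides give a Morse third side'' lemma.

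One small adjustment is needed in step 2 of (2). Slimness only produces points $\alpha(t),\beta(s)$ at distance up to $8N(3,0)$, and $\delta_N$ may be smaller than that. So in step 1 you should choose $T$ so that $\alpha([T,\infty))$ avoids the $8N(3,0)$-neighbourhood of $\beta$, and symmetrically for $\beta$, rather than the $\delta_N$-neighbourhood. This is always possible, because non-asymptotic Morse rays from $p$ eventually separate at every scale.
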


\medskip
\subsection{Contracting Boundary}

We now turn to contracting geodesics, which provide another way to describe hyperbolic-type behavior in metric spaces.

\begin{definition}[Contracting quasi-geodesic]
    Let $X$ be a geodesic metric space and let $\gamma:I\to X$ be a quasi-geodesic. Define the closest point projection \(\pi_\gamma:X\to \mathcal{P}(\gamma(I))\) by
    \[
    \pi_\gamma(x)=\{z\in\gamma(I)\mid d(x,z)=d(x,\gamma(I))\}.
    \]

    A function $\rho:[0,\infty)\to[0,\infty)$ is called \emph{sublinear} if it is non-decreasing, eventually non-negative, and
    \[
    \lim_{r\to\infty}\frac{\rho(r)}{r}=0.
    \]

    The quasi-geodesic $\gamma$ is called \emph{$\rho$-contracting} if
    \[
    d(x,y)\le d(x,\gamma(I)) \quad\Rightarrow\quad \operatorname{diam}(\pi_\gamma(x)\cup\pi_\gamma(y)) \le \rho(d(x,\gamma(I))). 
    \]

    If such a sublinear function $\rho$ exists, we say that $\gamma$ is \emph{contracting}.
\end{definition}

A fundamental result shows that Morse and contracting subsets are equivalent notions.

\begin{theorem}[{\cite[Theorem~1.4]{ACGH2017}, \cite[Theorem~2.2]{CM2019}}]
    Let $Z$ be a subset of a geodesic metric space $X$. Then the following are equivalent:

    \begin{enumerate}
        \item $Z$ is Morse;
        \item $Z$ is contracting.
    \end{enumerate}

    Moreover, the defining functions of the two properties determine each other effectively.
\end{theorem}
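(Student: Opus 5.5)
The statement immediately above is the equivalence of Morse and contracting subsets. It is quoted from \cite[Theorem~1.4]{ACGH2017} and \cite[Theorem~2.2]{CM2019}, so in the paper we simply cite it. Still, here is how I would prove it, treating the two implications separately and keeping track of how the gauges depend on each other. I would first reduce to the case where $Z$ is closed and every point has a nearest point in $Z$, by passing to a coarse closure. Closest point projection is then defined up to a bounded error, which all estimates absorb.

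\emph{Contracting $\Rightarrow$ Morse.} Let $\alpha$ be a $(K,C)$-quasi-geodesic with endpoints on $Z$. Set $D=\sup_t d(\alpha(t),Z)$ and take a maximal subpath $\alpha'$ lying outside the $R$-neighbourhood of $Z$, for a constant $R$ to be fixed later. Cover $\alpha'$ by consecutive points $x_0,\dots,x_m$, with each step $d(x_i,x_{i+1})$ comparable to $d(x_i,Z)$. By the contracting property, the projection moves by at most $\rho(d(x_i,Z))$ per step. The total projection displacement is therefore at most about $\sum_i \rho(d(x_i,Z))$. This sum is sublinear compared with the length of $\alpha'$, which is at least $\sum_i d(x_i,Z)$ up to constants. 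On the other hand, the endpoints of $\alpha'$ lie within $R$ of $Z$. Comparing the quasi-geodesic lower bound with the path through $Z$ shows that the length of $\alpha'$ is at most $K(2R + \text{projection displacement}) + C$. Since $\rho$ is sublinear, these two bounds together force $d(x_i,Z)$ to be bounded by a constant $N(K,C)$ depending only on $\rho,K,C$. This is the standard argument.

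\emph{Morse $\Rightarrow$ contracting.} This is the harder direction. Suppose $d(x,y)\le d(x,Z)=r$, and let $p\in\pi_Z(x)$ and $q\in\pi_Z(y)$ with $d(p,q)$ large. Form the concatenation $[p,x]\cup[x,y]\cup[y,q]$. This path has endpoints on $Z$ but goes far from $Z$, so it is not a uniform quasi-geodesic. Instead I would build a modified path whose quasi-geodesic constants depend only on the ratio $d(p,q)/r$. The path follows $[p,x']$, where $x'$ is the point on $[p,x]$ at distance about $d(p,q)/c$ from $p$, then a geodesic from $x'$ to $y'$, then $[y',q]$. Because $p$ and $q$ are closest points, this path is a $(K,C)$-quasi-geodesic with $K$ controlled. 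If $d(p,q)\ge\theta r$ for some $\theta$, the constants are uniform, and the Morse property forces $x'$ to lie within $N(K,C)$ of $Z$. This contradicts the fact that $d(x',Z)=d(x',p)$ is large.

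Carrying this out with a free parameter $\theta$ gives $d(p,q)\le \rho(r)$, where $\rho(r)=\inf\{\theta r : N(K_\theta,C_\theta)<\theta r/c\}$, and this $\rho$ is sublinear. The main obstacle is making this bookkeeping sublinear rather than merely linear. It requires choosing the detour scale so that the quasi-geodesic constants stay bounded while $\theta\to0$ as $r\to\infty$. Here I would follow the Arzhantseva--Cashen--Gruber--Hume argument.
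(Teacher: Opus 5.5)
The paper gives no proof of this statement. It quotes the result from \cite{ACGH2017} and \cite{CM2019}, so in the end you and the paper both defer to Arzhantseva--Cashen--Gruber--Hume. Your contracting $\Rightarrow$ Morse sketch is the standard argument and is fine at this level. Since every $d(x_i,Z)\ge R$, you have $\sum_i\rho(d(x_i,Z))\le\bigl(\sup_{s\ge R}\rho(s)/s\bigr)\sum_i d(x_i,Z)$, and taking $R$ large (depending on $\rho,K,C$) bounds the length of the excursion.

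The Morse $\Rightarrow$ contracting sketch has a genuine gap at the step ``because $p$ and $q$ are closest points, this path is a $(K,C)$-quasi-geodesic with $K$ controlled.'' The closest-point property gives $d(u,Z)=d(u,p)$ for $u\in[p,x']$ and $d(u,Z)=d(u,q)$ for $u\in[q,y']$. It says nothing about the geodesic $[x',y']$, which may drop back towards $p$, run near $Z$, and climb up to $y'$. In that case your concatenation backtracks a distance comparable to $d(p,q)/c\ge\theta r/c$ at each corner. Its additive constant then grows with $r$, and the Morse gauge gives no contradiction: already in $\mathbb{H}^2$ a $(1,C)$-quasi-geodesic can rise to height about $C/2$.

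This failure occurs for $Z$ a geodesic in $\mathbb{H}^2$. Take $p,q\in Z$ with $d(p,q)=\theta r$, and $x,y$ at height $r$ on the perpendiculars at $p,q$. Then $d(x',y')=d(p,q)+2d(p,q)/c-O(1)$, so $[x',y']$ fellow-travels $[x',p]\cup[p,q]\cup[q,y']$. Your argument would nevertheless conclude $d(p,q)\le\rho(r)=o(r)$, which is false. The reason is that your construction never uses the hypothesis $d(x,y)\le d(x,Z)$; in this example $d(x,y)\approx(2+\theta)r$. That hypothesis is the only thing distinguishing the contracting condition from an arbitrary pair of points.

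A correct argument must bring the short segment $[x,y]$ into the comparison path and control backtracking at the corners with constants depending only on $\theta$ and $N$. That is the real content of the ACGH proof. By contrast, the choice of $\rho$ via $\inf\{\theta r : \dots\}$, which you single out as the main obstacle, is routine once such a uniform quasi-geodesic is available.
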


For each $\zeta\in\partial_c X$ one can associate a canonical contraction function. Define
\[
\rho_\zeta(r)
=
\sup_{\alpha,x,y}
\operatorname{diam}\big(\pi_\alpha(x)\cup\pi_\alpha(y)\big),
\]
where the supremum is taken over geodesic representatives $\alpha\in\zeta$ and points $x,y\in X$ satisfying\(d(x,y)\le d(x,\alpha)\le r\). The function $\rho_\zeta$ is sublinear and every geodesic representing $\zeta$ is $\rho_\zeta$-contracting; see \cite[Lemma~5.2]{CM2019}. Using contracting geodesics, one defines the contracting boundary $\partial_c X$, consisting of contracting quasi-geodesic rays modulo Hausdorff equivalence. 
Cashen and Mackay~\cite{CM2019} introduced a topology on this boundary based on the notion of fellow travelling of quasi-geodesics. 

\begin{definition}[Topology of fellow traveling quasi-geodesics, {\cite{CM2019}}]
    Let $X$ be a proper geodesic metric space and fix a basepoint $o\in X$. For $r\ge1$, we define \(N_r^c(o)=\{x\in X\mid d(o,x)\ge r\}.\) 
    Let $\zeta\in\partial_c X$ and fix a geodesic ray $\alpha_\zeta\in\zeta$. For $r\ge1$ define $U(\zeta,r)$ to be the set of points $\eta\in\partial_c X$ such that for every $L\ge1$, $A\ge0$, and every continuous $(L,A)$–quasi-geodesic ray $\beta\in\eta$ we have
\[
d\big(\beta,\alpha_\zeta\cap N_r^c(o)\big)\le k(\rho_\zeta,L,A),
\]
where
\(
k(\rho,L,A)
=
\max\left\{
3A,\,
3L^2,\,
1+\inf\{R>0\mid 3L^2\rho(r)\le r \text{ for } r\ge R\}
\right\}.
\)

A subset $U\subset\partial_c X$ is declared to be open if for every $\zeta\in U$ there exists $r\ge1$ such that
\[
U(\zeta,r)\subset U.
\]
The contracting boundary equipped with this topology is called the \emph{topology of fellow–travelling quasi-geodesics} and is denoted by $\partial_c^{\mathcal{FQ}}X$.
\end{definition} 

Cashen and Mackay~\cite[Proposition~5.5, and~5.11]{CM2019} showed that the sets $\{U(\zeta,r)\mid r\ge1\}$ form a neighbourhood basis at $\zeta$, and that the resulting topology is independent, up to homeomorphism, of the choice of basepoint and of the choice of representative geodesic rays. Moreover, this topology is also quasi-isometric invariant.

In general, the topology on the contracting boundary is weaker than the direct limit topology on the Morse boundary.

\begin{lemma}
    The identity map \(id : \partial_M^\mathcal{DL} X \longrightarrow \partial_c^\mathcal{FQ} X\) is continuous and bijective. 
\end{lemma}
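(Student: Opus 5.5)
Bijectivity is immediate from the cited fact that, as sets, $\partial_M X$ and $\partial_c X$ coincide: both consist of equivalence classes of Morse (equivalently, contracting) geodesic rays. By the theorem of \cite{ACGH2017,CM2019} quoted above, each Morse gauge determines a contraction function and conversely. So the identity is a well-defined bijection, and the work is to prove continuity.

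By the universal property of the direct limit topology, $id$ is continuous if and only if its restriction to each stratum $\partial_M^N X_p$ is continuous into $\partial_c^{\mathcal{FQ}} X$. Fix a gauge $N$ and take the basepoint $o=p$; this is harmless because the $\mathcal{FQ}$ topology is independent of basepoint. Let $\zeta\in\partial_M^N X_p$ be represented by an $N$-Morse geodesic ray $\alpha=\alpha_\zeta$ with $\alpha(0)=p$, and fix $r\ge1$. The goal is to find $n$ such that every $[\gamma]$ with $\gamma\in V_n(\alpha)\cap\partial_M^N X_p$ lies in $U(\zeta,r)$.

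Let $\gamma\in V_n(\alpha)$ be $N$-Morse, and let $\beta\in[\gamma]$ be any continuous $(L,A)$-quasi-geodesic ray. We must show $d(\beta,\alpha\cap N_r^c(p))\le k(\rho_\zeta,L,A)$. I would proceed in three steps.
\begin{enumerate}
\item Since $\gamma$ is Morse and $\beta$ is asymptotic to it, $\beta$ stays in a bounded neighbourhood of $\gamma$.
\item Since $d(\gamma(t),\alpha(t))<\delta_N$ for $t<n$, $\beta$ passes near $\alpha(t)$ for some large $t\ge r$.
\item The distance from $\beta$ to $\alpha\cap N_r^c(p)$ must be bounded by $k(\rho_\zeta,L,A)$ with uniform constants.
\end{enumerate}

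Step 3 is the main obstacle, because $k$ depends only on $\rho_\zeta,L,A$ and not on $\gamma$. The naive Morse bound on $\beta$ near $\gamma$ depends on $L,A$ through $N(L,A)$, which need not be $\le k$. To handle this I would instead use the contraction of $\alpha$ directly, as in \cite[Lemma~5.6 / Prop.~5.5]{CM2019}. The relevant fact is that a continuous $(L,A)$-quasi-geodesic joining a point near $p$ to a point near $\alpha(t)$, with $t$ large, must enter the $k(\rho_\zeta,L,A)$-neighbourhood of $\alpha$ at some point with projection beyond $\alpha(r)$. This is exactly the estimate that motivates the definition of $k$.

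The argument then runs as follows. Choose $n$ large enough, depending on $N,L$-independent data, $r$ and $\rho_\zeta$, so that the following holds. The ray $\beta$ passes within a distance $D$ of $\gamma(t_0)$, and hence within $D+\delta_N$ of $\alpha(t_0)$, for some $t_0\gg r$. Concatenating along $\beta$ then gives a continuous quasi-geodesic between points near $p$ and near $\alpha(t_0)$, to which the contraction lemma applies.

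One subtlety remains: $D$ depends on $L,A$ through the Morse gauge of $\gamma$. I would handle it by the standard projection argument. If $\beta$ stayed outside the $k$-neighbourhood of $\alpha$ beyond $\alpha(r)$, then the sublinear contraction $\rho_\zeta$ would force the projection of $\beta$ to $\alpha$ to move slowly. Yet $\beta$ eventually tracks $\gamma$, which is near $\alpha$ up to time $n$. These two facts contradict each other once $n\ge$ some threshold depending only on $r$ and $\rho_\zeta$, giving the desired $n$ uniformly in $L,A$.

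Failing a clean uniform argument, I would simply cite \cite[Theorem~7.6 / Proposition~8.?]{CM2019}, where Cashen and Mackay prove that $\mathcal{FQ}$ is coarser than $\mathcal{DL}$.
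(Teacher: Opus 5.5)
Your bijectivity argument is fine. So is the reduction, via the universal property of the direct limit, to finding for each gauge $N$, each $\zeta\in\partial_M^N X_p$ and each $r\ge1$ an $n$ with $V_n(\alpha)\cap\partial_M^N X_p\subset U(\zeta,r)$. The paper gives no proof of this lemma at all; it implicitly invokes the Cashen--Mackay result that $\mathcal{FQ}$ is coarser than the direct limit topology. Your fallback citation is therefore in effect the paper's route, though you should cite the actual statement rather than ``Theorem~7.6 / Proposition~8.?''.

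Your self-contained argument, however, has a genuine gap exactly at the step you flag: the projection argument does not give a threshold independent of $(L,A)$.
\begin{itemize}
\item Suppose $\beta$ leaves the $k$-neighbourhood of $\alpha$ near $\alpha([0,r])$ and next returns within $D+\delta_N$ of $\alpha(t_0)$. Comparing the quasi-geodesic lower bound on that excursion with the slow drift of its projection gives a contradiction only once $t_0-r$ exceeds a quantity of order $k(\rho_\zeta,L,A)+A+D+\delta_N$, where $D=D(N,L,A)$.
\item All of $k$, $A$, $D$ are unbounded as $(L,A)$ varies, whereas $n$ must be fixed before $(L,A)$ is chosen. Sublinearity of $\rho_\zeta$ does not cure this, so your sentence asserting a threshold ``depending only on $r$ and $\rho_\zeta$'' is unsupported.
\end{itemize}

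The missing observation is that the condition defining $U(\zeta,r)$ is vacuous for large $(L,A)$.
\begin{itemize}
\item Since $\beta(0)=o=\alpha(0)$ and $d(o,\alpha(r))=r$, one always has $d(\beta,\alpha\cap N_r^c(o))\le r$. So nothing needs proving when $k(\rho_\zeta,L,A)\ge r$.
\item Because $k\ge\max\{3A,3L^2\}$, only $L<\sqrt{r/3}$ and $A<r/3$ remain. Every such $\beta$ is a $(\sqrt{r/3},r/3)$-quasi-geodesic.
\item Hence $D$, $k$ and all constants in the projection estimate are bounded in terms of $r$, $N$ and $\rho_\zeta$. Taking $n$ beyond the resulting bound completes your argument.
\end{itemize}

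This fix, like your ``concatenation from near $p$'', uses that the rays $\beta\in\eta$ start at $o$. That requirement is part of Cashen--Mackay's definition but is dropped in the paper's transcription of $U(\zeta,r)$. Without it the lemma fails. In a regular tree every $U(\zeta,r)$ would be $\{\zeta\}$: test with a subray of $\eta$ starting far past its branch point from $\alpha$. Then $\mathcal{FQ}$ would be discrete, while the direct limit topology is the usual Cantor-set topology.
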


Consequently, we obtain the following.

\begin{corollary}
    If $\partial_M^\mathcal{DL} X$ is compact, then $\partial_c^\mathcal{FQ} X$ is also compact.
\end{corollary}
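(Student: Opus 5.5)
The corollary follows from the preceding lemma together with the fact that the continuous image of a compact space is compact. Note that as sets $\partial_M X$ and $\partial_c X$ coincide by \cite[Theorem~2.2]{CM2019}. Hence the identity map $id\colon \partial_M^{\mathcal{DL}} X \to \partial_c^{\mathcal{FQ}} X$ is well defined, and it is surjective because it is a bijection.

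The argument runs as follows. Assume $\partial_M^{\mathcal{DL}} X$ is compact, and let $\{U_i\}_{i\in I}$ be an open cover of $\partial_c^{\mathcal{FQ}} X$. By the continuity of $id$ in the preceding lemma, each preimage $id^{-1}(U_i)$ is open in $\partial_M^{\mathcal{DL}} X$. These preimages cover $\partial_M^{\mathcal{DL}} X$, since $id$ is defined on all of it and the $U_i$ cover the target. Compactness then gives a finite subcover $id^{-1}(U_{i_1}),\dots,id^{-1}(U_{i_k})$. Applying surjectivity of $id$, we get $\partial_c^{\mathcal{FQ}} X = id\big(\bigcup_j id^{-1}(U_{i_j})\big) = \bigcup_j U_{i_j}$. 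So every open cover of $\partial_c^{\mathcal{FQ}} X$ has a finite subcover.

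There is essentially no obstacle once the preceding lemma is granted; all the content lies in the continuity of $id$. If that needed justification, I would use the universal property of the direct limit: it suffices to show that each inclusion $\partial_M^N X_p \to \partial_c^{\mathcal{FQ}} X$ is continuous. To do this, fix an $N$-Morse ray $\alpha$ and a neighbourhood $U(\zeta,r)$ of $\zeta=[\alpha]$. One then finds $n$ such that $V_n(\alpha)\subset U(\zeta,r)$. This uses two facts: quasi-geodesic rays in a class with gauge bounded by $N$ stay uniformly close to their geodesic representative (Lemma~\ref{lem_Morse_quasi-geodesic_stability}), and the fellow-travelling constant $\delta_N$ controls how far $\gamma$ stays close to $\alpha$ beyond radius $r$. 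Note also that compactness of $\partial_c^{\mathcal{FQ}} X$ here does not use any Hausdorff property, so no further input is needed.
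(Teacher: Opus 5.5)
Your proof is correct and follows the paper's route exactly: the corollary comes straight from the preceding lemma, since the identity $\partial_M^{\mathcal{DL}} X \to \partial_c^{\mathcal{FQ}} X$ is a continuous surjection and continuous images of compact spaces are compact. Your optional continuity sketch goes beyond what the paper does (it only states that lemma), and as written it would not quite work: Morse stability of $\gamma$ plus $\delta_N$-fellow-travelling gives a bound depending on $N$, whereas membership in $U(\zeta,r)$ requires the bound $k(\rho_\zeta,L,A)$, which needs the contraction property of $\alpha$ itself.
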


An application of Morse or contracting boundaries is the following boundary characterization of hyperbolic groups, due to Cordes--Durham and Cashen--Mackay.

\begin{theorem}[\cite{CordesDurham2019,CM2019}] \label{thm_characterization_hyp_gp}
    Let $G$ be a finitely generated infinite group. Then the following are equivalent:

    \begin{enumerate}
        \item $G$ is hyperbolic;
        \item the Morse boundary $\partial_M^\mathcal{DL} G$ is non-empty and compact;
        \item the contracting boundary $\partial_c^\mathcal{FQ} G$ is non-empty and compact.
    \end{enumerate}
\end{theorem}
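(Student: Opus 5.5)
The statement just before this point is Theorem~\ref{thm_characterization_hyp_gp}, the Cordes--Durham and Cashen--Mackay characterization of hyperbolic groups. It is a cited result, so I will not reprove it. Instead I sketch how the argument goes, since the same strategy will drive Theorem~\ref{thm_1}.

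\emph{The implications (1)$\Rightarrow$(2)$\Rightarrow$(3).} If $G$ is hyperbolic and infinite, every geodesic ray in a Cayley graph is $N$-Morse for a single gauge $N$ determined by $\delta$. Hence $\partial_M^{\mathcal{DL}} G=\partial_M^N G$, and this is the Gromov boundary with its usual topology. The Gromov boundary is compact. It is also non-empty, since an infinite finitely generated group contains bi-infinite geodesics by a compactness (Arzel\`a--Ascoli) argument. This gives (2). The step (2)$\Rightarrow$(3) is the Corollary above: the identity map to $\partial_c^{\mathcal{FQ}}$ is a continuous bijection, and continuous images of compact spaces are compact.

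\emph{The implication (3)$\Rightarrow$(1).} I would first show that compactness forces a uniform contraction gauge. Suppose instead that there are points $\zeta_n$ whose optimal functions $\rho_{\zeta_n}$ are not dominated by any single sublinear function. A subsequence then converges to some $\zeta$. Each neighbourhood $U(\zeta,r)$ requires every quasi-geodesic representative of $\zeta_n$ to pass within $k(\rho_\zeta,L,A)$ of $\alpha_\zeta$ far out. Combining this with cocompactness of the $G$-action, one can translate so that the bad contraction behaviour occurs near the basepoint. This contradicts the Cashen--Mackay observation that an $\mathcal{FQ}$-convergent sequence eventually follows $\alpha_\zeta$ on longer and longer initial segments with controlled gauge.

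Once the gauge is uniform, non-emptiness gives a contracting bi-infinite geodesic, by visibility, and translates of it. From here I would show that every geodesic in the Cayley graph is uniformly Morse. One way is to use the $4N(3,0)$-slimness of triangles with Morse sides (Lemma~\ref{lem_4N_slim_triangle}) together with approximating arbitrary geodesic segments by subsegments of uniformly Morse rays, again via cocompactness. Slim triangles everywhere then give hyperbolicity.

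The main obstacle is the first step of (3)$\Rightarrow$(1): extracting a uniform gauge from compactness. The difficulty is that a long non-contracting segment, such as a flat-like region, must be shown to produce a sequence of boundary points with no convergent subsequence. My first attempt would be to build rays that track such segments for longer and longer distances and then escape along a contracting ray.

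Regarding Theorem~\ref{thm_1} itself, I would reduce it to this characterization applied to $G^h$. Relative hyperbolicity means exactly that $G^h$ is Gromov hyperbolic, and the argument above uses only a proper geodesic space. The exception is cocompactness, which fails for the cusped space. There I would substitute the fact that horoballs are uniformly controlled: they are equivariant and there are finitely many orbits.
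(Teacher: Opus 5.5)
The paper does not prove this theorem; it quotes it from Cordes--Durham and Cashen--Mackay. Deferring to the citation therefore matches the paper, and your arguments for (1)$\Rightarrow$(2)$\Rightarrow$(3) are fine. Your sketch of (3)$\Rightarrow$(1), however, has a genuine gap at exactly the step you flag, and the contradiction you describe does not follow.

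A minor point first: translating the $\zeta_n$ produces a new sequence, so the convergent subsequence must be extracted \emph{after} translating, not before. The real problem is scale. For any geodesic $\alpha$, if $d(x,y)\le d(x,\alpha)\le r$ then $\operatorname{diam}(\pi_\alpha(x)\cup\pi_\alpha(y))\le 4r$. Hence failure of a uniform sublinear bound can only be witnessed at scales $r_n\to\infty$, with projection diameters at least $\epsilon r_n$. Convergence $\eta_n\to\zeta$ in $\partial_c^{\mathcal{FQ}}$ gives only this: for each fixed $r$ there is an $n(r)$ such that, for $n\ge n(r)$, the rays $[o,\eta_n)$ stay uniformly close to $\alpha_\zeta$ out to radius about $r$. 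You do not control $n(r)$, so nothing forces $r_n\le Cr$. The bad region, even if it starts at $o$, can lie beyond the controlled initial segment. The ``Cashen--Mackay observation'' that an FQ-convergent sequence eventually has controlled gauge is false. The paper's own remark on $\mathbb{Z}*\mathbb{Z}^2$ shows $a^kb^ka^\infty\to a^\infty$ in $\partial_c^{\mathcal{FQ}}$, although these rays are not uniformly contracting. This is exactly why Proposition~\ref{prop_weakhull_hyperbolic}(1), which says a compact set lies in a single stratum, has no FQ analogue.

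What is missing is a construction. In a non-hyperbolic group with non-empty contracting boundary, you need contracting rays $\eta_n$ from $o$ whose failure to contract is self-similar near $o$. That is, for every length $s\le S_n$ with $S_n\to\infty$, the initial segment of length $s$ must fail contraction linearly at scale $s$, with constants independent of $n$. In the free-product example this means using $b^ka^\infty$ rather than $a^kb^ka^\infty$. You would also have to check that concatenating with an escaping contracting ray still gives a uniform quasi-geodesic representing a point of the contracting boundary. Only with this in hand does fellow-travelling with $\alpha_\zeta$ on an initial segment of length $r$ contradict sublinearity of the induced contraction function for large $r$. Alternatively, you could exploit the quantifier over all $(L,A)$-representatives in $U(\zeta,r)$ by producing two representatives that diverge from $o$ with a divergence function independent of $n$. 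Either way, this is the substantive content of the Cashen--Mackay direction, and your proposal stops at ``my first attempt would be\ldots''.

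By contrast, (2)$\Rightarrow$(1) needs none of this. First upgrade one boundary point to two, by translating a Morse ray and passing to a limit with uniform gauge. Then, by Proposition~\ref{prop_weakhull_hyperbolic}, the compact set $\partial_M^{\mathcal{DL}}G$ lies in one stratum. Its weak hull is stable and $G$-invariant, hence coarsely dense, and slim triangles follow. Your vague second step, ``approximating arbitrary segments by subsegments of Morse rays'', should be carried out in this way.
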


\medskip
\subsection{Stability of Weak Hull}

We now discuss stability properties of subsets determined by the Morse boundary.

\begin{definition}[Stable subspaces {\cite{CordesHume2017}}]
    Let $X$ be a geodesic metric space. A quasi-convex subspace $Y\subset X$ is called \emph{$N$-stable} if every pair of points in $Y$ can be joined by an $N$-Morse geodesic in $X$.
\end{definition}

Cordes and Hume \cite{CordesHume2017} introduced subsets $X_e^{(N)}$ consisting of points that can be connected to a basepoint by $N$-Morse geodesics; that is,
\[
    X_e^{(N)} = \{\, y \in X \mid \text{there exists an $N$--Morse geodesic segment } [e,y] \subset X \,\}.
\]
This family of subsets $X_e^{(N)}$ satisfies several important properties.
\begin{theorem}[{\cite[Theorem~A]{CordesHume2017}}] \label{thm_properties_of_stable_sets}
    Each $X_e^{(N)}$ is hyperbolic and stable in \(X\). Moreover, every stable subset of \(X\) is a quasi–convex subset of some  $X_e^{(N)}$. 
\end{theorem}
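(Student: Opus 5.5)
The plan is to get everything from the slim-triangle estimate for Morse geodesics (Lemma~\ref{lem_4N_slim_triangle}) together with two standard gauge-change facts. I will prove those facts myself, since they are not among the results recalled above.

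\textbf{Fact (a).} A subsegment of an $N$-Morse geodesic is $N_1$-Morse, where $N_1$ depends only on $N$. To prove it, take a quasi-geodesic with endpoints on the subsegment. I first show it stays within $N$ of the whole geodesic, and then that the pieces near the complementary parts can be rerouted. This uses Lemma~\ref{lem_Morse_quasi-geodesic_stability}.

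\textbf{Fact (b).} In a geodesic triangle two of whose sides are $N$-Morse, the third side is $N_2$-Morse, where $N_2$ depends only on $N$. To prove it, let $\beta$ be a $(K,C)$-quasi-geodesic with endpoints on the third side $[x,y]$. Concatenate $\beta$ with the pieces of $[x,y]$ and then with a subsegment of $[e,x]$ or $[e,y]$ to produce a quasi-geodesic with endpoints on one of the Morse sides. This is done after cutting $[x,y]$ at the point where its closeness switches from $[e,x]$ to $[e,y]$, which is a $4N(3,0)$-slim cut. Slimness controls the added constants.

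Because of (a), the correct bookkeeping is to allow the gauge to grow by a function of $N$ throughout. Note that $X_e^{(N)}\subseteq X_e^{(N')}$ for $N\le N'$.

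\textbf{Hyperbolicity.} Take $x,y,z\in X_e^{(N)}$ with $N$-Morse geodesics $[e,x]$, $[e,y]$, $[e,z]$. By Lemma~\ref{lem_4N_slim_triangle}, every triangle with vertex $e$ and two vertices in $X_e^{(N)}$ is $\delta:=4N(3,0)$-slim. Slimness gives $(x|y)_e = d(e,[x,y]) + O(\delta)$. Now compare the three geodesics $[x,y]$, $[y,z]$, $[x,z]$ with tripods based at $e$: each triangle $e,x,y$ is uniformly close to a tripod. From this I obtain the four-point inequality
\[
(x|z)_e\ge\min\{(x|y)_e,(y|z)_e\}-\delta' ,
\]
with $\delta'$ depending only on $N$. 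This inequality is at the single basepoint $e$ and holds for all triples in $X_e^{(N)}$, so $X_e^{(N)}$ is Gromov hyperbolic with the restricted metric. Here I use the standard fact that the four-point condition at one basepoint implies it at every basepoint with constant $2\delta'$. No geodesicity of the subspace is needed for this.

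\textbf{Stability and quasi-convexity.} For $x,y\in X_e^{(N)}$, fact (b) shows that $[x,y]$ is $N_2$-Morse. Moreover, each point $p\in[x,y]$ lies within $\delta$ of some point $q$ on $[e,x]$ or $[e,y]$. The subsegment $[e,q]$ is $N_1$-Morse by (a), so $q\in X_e^{(N_1)}$. This yields quasi-convexity of $X_e^{(N)}$ in $X$ and shows that pairs of points are joined by uniformly Morse geodesics, that is, stability. Carried out literally, it shows $X_e^{(N)}$ is quasi-convex with $N_2$-Morse geodesics between its points. Any residual mismatch of gauges is absorbed by replacing $N$ by a larger gauge. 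I expect this replacement to be harmless, since the statement is about the family of sets.

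\textbf{Second assertion.} Let $Y$ be $N$-stable and fix $y_0\in Y$. Every geodesic $[y_0,y]$ with $y\in Y$ is $N$-Morse, so $Y\subseteq X_{y_0}^{(N)}$. Changing basepoint from $y_0$ to $e$ replaces a Morse geodesic $[y_0,y]$ by $[e,y]$, which forms a triangle with $[e,y_0]$ at bounded distance $d(e,y_0)$. A variant of (b) therefore shows $[e,y]$ is $N'$-Morse with $N'$ depending on $N$ and $d(e,y_0)$. Hence $Y\subseteq X_e^{(N')}$. Quasi-convexity of $Y$ inside $X_e^{(N')}$ follows because $Y$ is quasi-convex in $X$ and $X_e^{(N')}$ carries the restricted metric.

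The main obstacle is fact (b), together with the gauge bookkeeping in (a). These are where the explicit constants must be tracked; everything else is formal manipulation of slim triangles.
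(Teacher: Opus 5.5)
Your hyperbolicity argument (the four-point condition at the single basepoint $e$, obtained from the $4N(3,0)$-slim triangles whose two sides at $e$ are $N$-Morse, followed by the change-of-basepoint lemma) is sound. So is your treatment of the ``moreover'' part. The paper gives no proof here; it simply quotes Cordes--Hume.

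\textbf{The gap is the quasi-convexity of $X_e^{(N)}$, which is part of ``stable'' in the paper's definition.} Your argument shows that for $x,y\in X_e^{(N)}$, each $p\in[x,y]$ is within $\delta$ of a point $q$ joined to $e$ by an $N_1$-Morse geodesic. That is, $[x,y]$ lies in the $\delta$-neighbourhood of $X_e^{(N_1)}$; indeed, applying (a) and (b) to the triangle $e,x,p$ gives $[x,y]\subset X_e^{(N')}$. Since $X_e^{(N_1)}$ may be strictly larger than $X_e^{(N)}$, this gives no bound on $d(p,X_e^{(N)})$. So ``this yields quasi-convexity of $X_e^{(N)}$ in $X$'' does not follow, and it is not what the argument gives ``carried out literally''.

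Enlarging the gauge does not absorb the mismatch. The claim is about each set $X_e^{(N)}$, and replacing $N$ by $N_1$ also replaces the set. The same argument then only lands near $X_e^{(N_{1,1})}$, with $N_{1,1}$ obtained by applying (a) to $N_1$, and so on. You would need a gauge fixed by $N\mapsto N_1$, which (a) does not provide. To close the gap, you must show that every point of an $N$-Morse geodesic $[e,x]$ is uniformly close to some $z$ admitting an $N$-Morse geodesic $[e,z]$ with the \emph{same} $N$. Otherwise, state only what you proved: geodesics between points of $X_e^{(N)}$ are uniformly Morse and lie in $X_e^{(N')}$, so $X_e^{(N)}$ is quasi-convex in $X_e^{(N')}$. (The paper only uses the hyperbolicity and the ``moreover'' part, in Proposition~\ref{prop_weakhull_hyperbolic}.)

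A secondary problem is your sketch of (b). Concatenating $\beta$ with pieces of $[x,y]$ and a subsegment of $[e,x]$ or $[e,y]$ is not automatically a quasi-geodesic. When the endpoints of $\beta$ lie on opposite sides of your cut, a quasi-geodesic followed by a segment heading back towards $e$ can backtrack by an unbounded amount. Moreover, nothing a priori prevents $\beta$ from returning near the pieces you append, which is exactly what you are trying to rule out.

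A route that works is as follows. Pick $w\in\beta$ with $d(e,w)\le d(e,\beta)+1$. For $a\in[e,w]$ and $b\in\beta$ one has $d(a,b)\ge\max\{d(a,w),\,d(w,b)-d(a,w)\}-1$. Hence $[e,w]$, followed by either half of $\beta$ and a $\delta$-connector, is a quasi-geodesic with both endpoints on a single Morse side. By (a) and Lemma~\ref{lem_Morse_quasi-geodesic_stability}, each half of $\beta$ is then Hausdorff close to a subsegment of that side. The quasi-geodesic inequality for $\beta$ prevents $w$ from lying deep in the region where $[e,x]$ and $[e,y]$ fellow travel, which puts $\beta$ near $[x,y]$. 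Alternatively, cite the standard lemma that the third side of a triangle with two $N$-Morse sides is $N'$-Morse.
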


Using Morse (or contracting) boundaries, one can define the weak hull of a subset of the boundary.

\begin{definition}[Weak hull]
    Let $X$ be a proper geodesic metric space and let $K\subset\partial_M X$ contain at least two points. The \emph{weak hull} $\mathfrak h(K)$ is the union of all bi-infinite geodesics whose endpoints lie in $K$.
\end{definition}

The weak hull of a compact subset of the Morse boundary turns out to be hyperbolic.

\begin{proposition}\label{prop_weakhull_hyperbolic}
    Let $X$ be a proper geodesic metric space and let $K \subset \partial_M X$ be a compact subset containing at least two points. Then: 
    \begin{enumerate}
        \item For every $p \in X$, there exists a Morse gauge $N$ such that $K \subset \partial_M X_p^{(N)}$.
        \item The weak hull $\mathfrak{h}(K)$ is a stable subset of $X$.
        \item There exists a Morse gauge $N'$ such that $\mathfrak{h}(K)$ is a quasiconvex subset of $X_p^{(N')}$, and hence $\mathfrak{h}(K)$ is Gromov hyperbolic.
    \end{enumerate}
\end{proposition}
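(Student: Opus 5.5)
The plan is to prove the three parts in order, each feeding into the next. The main input for (1) is the basic structure of the direct-limit topology, namely that compact subsets of $\partial_M X_p$ lie in a single stratum. For (2) and (3) the input is Lemma~\ref{lem_4N_slim_triangle} together with Theorem~\ref{thm_properties_of_stable_sets}.

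\textbf{Part (1).} I would invoke the fact (Cordes, Cordes--Durham) that a compact subset of a direct limit of the form $\varinjlim \partial_M^N X_p$ lies in some $\partial_M^N X_p$. The argument is by contradiction. Otherwise, pick a countable cofinal sequence of gauges $N_1\le N_2\le\cdots$ (possible since gauges are ordered) and points $\zeta_i\in K\setminus \partial_M^{N_i}X_p$. Using the standard fact that an $N$-Morse ray only has finitely many "levels", one checks that the set $\{\zeta_i\}$ meets each $\partial_M^{N}X_p$ in a finite set. Hence every subset of $\{\zeta_i\}$ is closed in the direct limit topology, so $\{\zeta_i\}$ is an infinite closed discrete subset of the compact set $K$, which is a contradiction. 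Independence of the basepoint follows from the basepoint-change estimate: an $N$-Morse ray from $p$ is asymptotic to an $N''$-Morse ray from $p'$, with $N''$ depending only on $N$ and $d(p,p')$.

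\textbf{Part (2).} Let $\gamma$ be a bi-infinite geodesic with endpoints $\xi,\eta\in K$. Take $N$-Morse rays $\alpha,\beta$ from $p$ representing $\xi,\eta$. The ideal triangle $\alpha\cup\beta\cup\gamma$ is slim by a limiting version of Lemma~\ref{lem_4N_slim_triangle}, with constant depending only on $N$. Slimness together with the Morse property of the two sides gives a uniform gauge $N_1=N_1(N)$ such that $\gamma$ is $N_1$-Morse; a quasi-geodesic with endpoints on $\gamma$ is pushed into a neighbourhood of $\alpha\cup\beta$ and then back onto $\gamma$.

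Next I show $\mathfrak h(K)$ is quasiconvex with uniform Morse geodesics between its points. Take $x\in\gamma_1$ and $y\in\gamma_2$, where $\gamma_1,\gamma_2$ are geodesics with endpoints in $K$. Use the concatenation of $[x,\gamma_1\text{-endpoint}]$-type subrays and a geodesic $\gamma_3$ joining an endpoint of $\gamma_1$ to an endpoint of $\gamma_2$. Then $[x,y]$ lies in a uniform neighbourhood of a path built from at most three uniformly Morse geodesic pieces, via thin quadrilaterals. A piecewise Morse path with uniformly thin corners is Morse with a controlled gauge, so $[x,y]$ is $N_2$-Morse and stays near $\mathfrak h(K)$. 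This gives stability.

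\textbf{Part (3).} This follows directly from Theorem~\ref{thm_properties_of_stable_sets}. A stable subset is quasiconvex in some $X_p^{(N')}$, and $X_p^{(N')}$ is hyperbolic, so a quasiconvex subset of it is hyperbolic with the induced metric, up to quasi-isometry via the path metric in a neighbourhood.

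The main obstacle is the uniformity in (2): getting a single gauge for geodesics between arbitrary points of the hull, including the degenerate cases where $x$ or $y$ is far out along rays. I would handle this by the explicit quadrilateral and pentagon slimness estimates obtained by iterating Lemma~\ref{lem_4N_slim_triangle}, approximating ideal vertices by far-out points and using properness with Arzelà--Ascoli to pass to limits.
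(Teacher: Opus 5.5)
Your overall plan matches what the paper does by citation. Parts (1) and (2) are Cordes--Durham's Lemmas~4.1 and~4.2, run without the group action, and (3) is Cordes--Hume's Theorem~A. Your sketches of (2) and (3) are reasonable reconstructions of those arguments.

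The genuine gap is in (1). You claim one can choose a countable cofinal sequence of Morse gauges $N_1\le N_2\le\cdots$ ``since gauges are ordered''. This is false. Gauges are arbitrary functions $\mathbb{R}_{\ge1}\times\mathbb{R}_{\ge0}\to\mathbb{R}_{\ge0}$ under the pointwise order. For any countable family $(N_m)$, the gauge $N(\lambda,\epsilon)=1+\max_{m\le\lambda}N_m(\lambda,\epsilon)$ satisfies $N(m,0)>N_m(m,0)$, so no $N_m$ dominates it. There is also no general reason for the strata $\partial_M^N X_p$ themselves to admit a countable cofinal subfamily. Without cofinality, the assertion that $\{\zeta_i\}$ meets every $\partial_M^N X_p$ in a finite set is exactly what needs proof. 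The appeal to an $N$-Morse ray having ``finitely many levels'' does not correspond to an actual fact that would supply it. Nor can general topology alone rescue the step: over an uncountable directed set, compact sets need not lie in one stratum. For instance, $[0,1]$ with its usual topology is the direct limit of its countable subsets. So geometric input is required.

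What makes (1) work is a diagonalization at a single parameter pair, combined with control under change of representative.
\begin{enumerate}
\item For each $\zeta\in K$, fix a geodesic ray $\beta_\zeta$ from $p$ representing $\zeta$. Let $\mu_\zeta(\lambda,\epsilon)$ be its optimal gauge: the supremum of distances to $\beta_\zeta$ of points on $(\lambda,\epsilon)$-quasi-geodesics with endpoints on $\beta_\zeta$.
\item If $\sup_{\zeta\in K}\mu_\zeta(\lambda,\epsilon)<\infty$ for every $(\lambda,\epsilon)$, this pointwise supremum is itself a gauge $N$, and $K\subset\partial_M^N X_p$.
\item Otherwise there is one pair $(\lambda_0,\epsilon_0)$ and points $\zeta_i\in K$ with $\mu_{\zeta_i}(\lambda_0,\epsilon_0)\to\infty$.
\item Take any gauge $N$. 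If $\zeta_i\in\partial_M^N X_p$, some ray from $p$ representing $\zeta_i$ is $N$-Morse. Then $\beta_{\zeta_i}$, which has the same basepoint and is asymptotic to it, stays within a distance depending only on $N$ of that ray. Hence $\beta_{\zeta_i}$ is $N'$-Morse with $N'$ depending only on $N$, so $\mu_{\zeta_i}(\lambda_0,\epsilon_0)\le N'(\lambda_0,\epsilon_0)$.
\item This inequality holds for only finitely many $i$. So $\{\zeta_i\}$ meets each stratum in a finite set and is closed and discrete in the direct limit topology. Your compactness contradiction then goes through.
\end{enumerate}
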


\begin{proof}
    Statement (i) and (ii) follows from Lemma~4.1  and Lemma~4.2 of \cite{CordesDurham2019} respectively.  
    In Lemma~4.1  and Lemma~4.2 of \cite{CordesDurham2019} the authors have assumed that the space admits an isometric group action, but their proof also holds without this assumption.
    Statement (iii) follows from Theorem~\ref{thm_properties_of_stable_sets}.
\end{proof}

However, this phenomenon fails for the contracting boundary.


\begin{remark}
    Proposition~\ref{prop_weakhull_hyperbolic} does not hold in general for contracting boundaries. Consider the group
    \[
    G=\mathbb{Z} * \mathbb{Z}^2, \quad \text{where} \quad \mathbb{Z}= \langle a \rangle, \quad \text{and} \quad \mathbb{Z} \times \mathbb{Z} = \langle b,c \mid bcb^{-1}c^{-1}=1 \rangle .
    \]
    For each $k\in\mathbb N$ let
    \[
    \gamma_k = a^k b^k a a a \cdots .
    \]
    Then each $\gamma_k$ is contracting, and the sequence $\gamma_k(\infty)$ converges to $\gamma(\infty)$ in $\partial_c^{\mathcal FQ}G$, where $\gamma=a a a\cdots$. The set \(\{\gamma(\infty)\} \cup \{\gamma_k(\infty) \mid k\in\mathbb{N}\}\) is a is compact subset of $\partial_c^{\mathcal FQ}G$, and the corresponding weak hull is also hyperbolic, but is not stable in \(G\).

    On the other hand, consider the sequence $\{\beta_k\}$ defined by 
    \[
    \beta_k = a^kb^kc^kaaa\cdots
    \]
    Each $\beta_k$ is contracting, and the sequence $\{\beta_k(\infty)\}$ also converges to $\gamma(\infty)$ in $\partial_c^{\mathcal FQ}G$. Although the set \(\{\beta(\infty)\} \cup \{\gamma_k(\infty) \mid k\in\mathbb{N}\} \subset \partial_c^{\mathcal FQ}G\) is compact in $\partial_c^{\mathcal FQ}G$, the corresponding weak hull is neither hyperbolic nor stable in \(G\). 
\end{remark}

\section{Pal-Pandey Characterizations of Relative Hyperbolicity}

In this section, after recalling the Groves--Manning definition of relatively hyperbolic groups via combinatorial horoballs, we review a result of Pal-Pandey \cite{palpandey2024} involving the contracting boundary, and then establish an alternative characterization in terms of the Morse boundary.

\subsection{Relatively hyperbolic groups}

The notion of relatively hyperbolic groups was introduced by Gromov and initially developed by Farb and Bowditch \cite{Gro87, Farb, Bowditch}. We refer the reader to the survey article of Hruska~\cite{Hruska} for a discussion of the various equivalent definitions of relative hyperbolicity. In this article we adopt the definition due to Groves and Manning~\cite{GM2008}, which is formulated using combinatorial horoballs and the associated cusped space.

\begin{definition}[Combinatorial horoball, \cite{GM2008}] \label{defn_horoball}
    Let $H$ be a locally finite graph with vertex set $V(H)$ and all edges of length one. Let $d_H$ denote the associated path metric on $H$. The combinatorial horoball based on $H$, denoted by $\mathcal{H}(H,d_H)$, is the graph defined as follows:
    (1)  The vertex set is $\mathcal{H}^{(0)} :=  V(H) \times \{\mathbb{N}\cup\{0\}\}$. \\ 
    (2) The edge set $\mathcal{H}^{(1)}$ consists of two types of edges:
    \begin{itemize}
        \item Vertical edges: For each $n \in \mathbb{N}\cup \{0\}$ and $x \in V(H)$, there is an edge between $(x, n)$ and $(x, n+1)$. 

        \item Horizontal edges: For each $x, y \in V(H)$,
        \begin{itemize}
            \item if \(e\) is an edge of $H$ joining \(x\) to \(y\), then there is a corresponding edge \(\overline{e}\) connecting \((x,0)\) to \((y,0)\), 

            \item  for each $n \in \mathbb{N}$, if $0 < d_H(x, y) \leq 2^n$, then there is a single edge between $(x, n)$ and $(y, n)$. 
        \end{itemize}
    \end{itemize}    
\end{definition} 

All edges of $\mathcal{H}(H,d_H)$ are assigned unit length, and the graph is equipped with the induced path metric. With this metric, $\mathcal{H}(H,d_H)$ is a hyperbolic metric space (see~\cite{GM2008}). For convenience, we denote this hyperbolic metric space by $(H^h,d_{H^h})$ and refer to it simply as a \emph{horoball}. 

\begin{definition}[Relatively hyperbolic group] \label{defn_rel_hyp}
    Let $G$ be a finitely generated group and let \(\mathcal{H}=\{H_1,H_2,\dots,H_n\}\) be a finite collection of finitely generated infinite subgroups of $G$. Choose a finite generating set $S$ of $G$ such that, for each $i=1,\dots,n$, the intersection $H_i\cap S$ generates $H_i$. Let $X$ denote the Cayley graph of $G$ with respect to the generating set $S$.

    The space obtained by attaching a combinatorial horoball $H^h$ to each left coset of every subgroup $H_i\in\mathcal{H}$ is denoted by $X^h$ and is called the \emph{cusped space} associated to the pair $(G,\mathcal{H})$. The left cosets of the subgroups in $\mathcal{H}$ inside $X$ are referred to as \emph{horospheres} (or horosphere-like sets).

    We say that $G$ is \emph{$\delta$-hyperbolic relative to $\mathcal{H}$}, or simply that the pair $(G, \mathcal{H})$ is a \emph{$\delta$-relatively hyperbolic group} if the cusped space $X^h$ is a $\delta$-hyperbolic metric space. The group $G$ is said to be \emph{hyperbolic relative to $\mathcal{H}$} if there exists $\delta \ge 0$ such that $(G,\mathcal{H})$ is $\delta$-relatively hyperbolic.
\end{definition} 

Note that all groups appearing in the pair $(G,\mathcal{H}_G)$ are finitely generated, and hence the associated cusped space is locally finite and proper (see~\cite[Remark~3.14]{GM2008}). 

\begin{lemma}[{\cite[Lemma~3.5]{palpandey2024}}]
    Let \(G\) be a finitely generated group and let \(\mathcal{H}\) be a finite collection of finitely generated, infinite subgroups of infinite index in \(G\). Let \(G^h\) denote the cusped space associated to the pair \((G,\mathcal{H})\). \(\partial_c G^h\) is metrizable. 
\end{lemma}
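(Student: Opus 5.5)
The plan is to deduce metrizability from the Urysohn metrization theorem. This requires showing that $\partial_c^{\mathcal{FQ}} G^h$ is Hausdorff, regular, and second countable.

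The first two properties hold for the $\mathcal{FQ}$ topology on the contracting boundary of an arbitrary proper geodesic space. This is part of Cashen--Mackay's analysis in~\cite{CM2019}, where they show that the sets $U(\zeta,r)$ form a neighbourhood basis at $\zeta$ and that the topology is Hausdorff and regular. I will simply cite this. Properness of $G^h$ holds by~\cite[Remark~3.14]{GM2008}, since every $H\in\mathcal{H}$ is finitely generated and $G^h$ is therefore a locally finite graph.

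The real content is second countability. Cashen--Mackay deduce it from a cocompact group action, which is \emph{not} available here because $G$ does not act cocompactly on $G^h$. I would replace cocompactness by countability coming from local finiteness. Fix a vertex basepoint $o\in G\subset G^h$. Since $G^h$ is a locally finite graph with countably many vertices, the set $\Sigma$ of finite geodesic edge-paths starting at $o$ is countable. For each $\zeta$ I would choose a representative geodesic ray $\alpha_\zeta$ from $o$ that is an edge-path. Such a ray exists by an Arzelà--Ascoli/diagonal argument in the locally finite graph, and it is contracting by the Morse--contracting equivalence.

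For $\sigma\in\Sigma$ and $m\in\mathbb N$, define $W(\sigma,m)$ to be the set of $\eta\in\partial_c G^h$ such that some geodesic representative of $\eta$ based at $o$ has initial segment $\sigma$ and has contraction gauge bounded in terms of $m$. To make the gauge index countable, I would use the Morse--contracting equivalence to replace $\rho_\zeta$ by a Morse gauge, and dominate gauges by a countable cofinal family, for instance step functions with integer values on integer grids. I then take interiors of these sets; since the family is indexed by $\Sigma\times\mathbb N$, it is countable.

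The key step is to check that this family is a basis. Given $\eta\in U(\zeta,r)$, I need to find an open set $V$ in the family with $\eta\in V\subset U(\zeta,r)$. For this I would use \cite[Proposition~5.5]{CM2019}: it supplies $r'$ with $U(\eta,r')\subset U(\zeta,r)$. I would then show that if a ray $\beta$ from $o$ shares a sufficiently long initial edge-path $\sigma=\alpha_\eta|_{[0,n]}$ with $\alpha_\eta$, where $n\gg r'$ is chosen depending on $\rho_\eta$ and the gauge bound $m$, then every $(L,A)$--quasi-geodesic representative of $[\beta]$ comes within $k(\rho_\eta,L,A)$ of $\alpha_\eta\cap N^c_{r'}(o)$. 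The tool here is Lemma~\ref{lem_Morse_quasi-geodesic_stability} applied to the Morse ray $\beta$: a quasi-geodesic in $[\beta]$ must track $\beta$, and hence track $\sigma$, past distance $r'$.

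I expect this uniformity to be the main obstacle. The constant $k$ depends on $\rho_\eta$ and not on the contraction of $\beta$, so I must show that the bounded gauge $m$ together with a long common prefix forces the tracking estimate. I would handle this by first proving it for the Morse gauge $N$ determined by $m$, via slim triangles (Lemma~\ref{lem_4N_slim_triangle}), and only afterwards translating it back to the $k(\rho_\eta,L,A)$ bound.
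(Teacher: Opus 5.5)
The paper gives no proof of this lemma; it is quoted from Pal--Pandey. So your argument has to stand on its own, and it has a genuine gap at the one point you do not address.

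For the interiors of the $W(\sigma,m)$ to form a basis, you need $\eta\in\operatorname{int}W(\sigma,m)$ for suitable $\sigma,m$. That is, some $U(\eta,r)$ must consist of points of gauge at most $N_m$. This is false in general. The set $U(\eta,r)$ is cut out using only the contraction function $\rho_\eta$ of the centre, and it typically contains points of arbitrarily bad gauge.

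Concretely, take $G=\langle a,d\rangle*\langle b,c\rangle\cong F_2*\mathbb{Z}^2$, $\mathcal{H}=\{\langle d\rangle\}$, $o=1$, $\eta=[aaa\cdots]$ and $\xi_k=[a^kb^kc^kaaa\cdots]$. The horoballs sit on $d$-lines, so every $a$-edge of $G^h$ is still a bridge, and $a^k$ is a cut vertex separating $o$ from the tail of $\xi_k$. Hence every continuous quasi-geodesic representative of $\xi_k$ based at $o$ passes through $a^k\in\alpha_\eta$, and so $\xi_k\in U(\eta,r)$ for all $k\ge r$.

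On the other hand, every geodesic representative of $\xi_k$ from $o$ crosses the isometrically embedded flat $a^k\langle b,c\rangle$ by a monotone staircase from $a^k$ to $a^kb^kc^k$. One of the two L-shaped geodesics between these points has its corner at distance at least $k/2$ from that representative, so its gauge satisfies $N(1,0)\ge k/2$. Hence, for every $\sigma$ and $m$, only finitely many $\xi_k$ lie in $W(\sigma,m)$. No $U(\eta,r)$ is contained in $W(\sigma,m)$, and $\eta$ lies in none of your basic open sets.

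This is the phenomenon of the Remark in Section~2 of the paper, and it is exactly what separates $\mathcal{FQ}$ from $\mathcal{DL}$. Your $W(\sigma,m)$ are basic sets of the strata $\partial_M^{N}$ from which the direct limit topology is built, and they are not $\mathcal{FQ}$-neighbourhoods.

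There are two further problems. First, the countable cofinal family of gauges does not exist as described. Integer-valued step functions on an integer grid form an uncountable set. The diagonal gauge $N(K,C)=1+\max_{m\le K}N_m(K,C)$ shows that no countable family $\{N_m\}$ dominates all gauges, so indexing by $m\in\mathbb{N}$ would need an argument about the gauges that actually occur in $G^h$. Second, demanding an exact common prefix $\sigma$ is too rigid in a general graph, since $\mathcal{FQ}$-proximity to $\eta$ only forces coarse fellow travelling.

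The containment $V\subset U(\zeta,r)$ that you single out as the main obstacle is the easier direction; Cashen--Mackay's nesting estimates handle it. The real content of second countability is to produce countably many sets that are genuine $\mathcal{FQ}$-neighbourhoods of each of their points. By the example above, these sets cannot impose a gauge bound on their members. The natural candidates are sets like $U(\zeta,r)$ themselves, controlled by the contraction of a centre $\zeta$ drawn from a countable family. That is where the absence of a cocompact action of $G$ on $G^h$ must be confronted. One possible route is the fact that a geodesic ray in $G^h$ is either eventually vertical in one of countably many horoballs or meets $G$ at arbitrarily large times.
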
 

The following result, due to Pal--Pandey, establishes that the compactness of the contracting boundary together with the condition that every vertical ray is contracting provides a characterization of relative hyperbolicity.

\begin{theorem}[{\cite[Theorem~1.1]{palpandey2024}}] \label{thm_pal_pandey}
    Let $G$ be a finitely generated group and let 
    \(
    \mathcal{H}=\{H_i\}
    \)
    be a finite collection of finitely generated infinite index subgroups of $G$. Then $G$ is hyperbolic relative to $\mathcal{H}$ if and only if every combinatorial horoball $\mathcal{H}^h_i$ is contracting in the cusped space $G^h$, and the cusped space $G^h$ has compact contracting boundary.
\end{theorem}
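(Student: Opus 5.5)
The forward implication is essentially formal, and the backward implication is where the contracting-horoball hypothesis does the real work.

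\emph{Forward direction.} Suppose $G$ is hyperbolic relative to $\mathcal{H}$, so $G^h$ is a proper $\delta$-hyperbolic space.
\begin{itemize}
\item By the Morse lemma, every geodesic of $G^h$ is $N$-Morse for a single gauge $N$ depending only on $\delta$. Hence $\partial_c G^h$ coincides, as a set, with the Gromov boundary.
\item Because the gauge is uniform, the neighbourhoods $U(\zeta,r)$ are comparable to the usual Gromov neighbourhoods. So $\partial_c^{\mathcal{FQ}}G^h$ is homeomorphic to $\partial G^h$, which is compact because $G^h$ is proper.
\item Each combinatorial horoball is uniformly quasiconvex in $G^h$. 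Quasiconvex subsets of hyperbolic spaces are uniformly contracting, since closest-point projection to them is coarsely Lipschitz with bounded fibres far away. This gives the horoball condition.
\end{itemize}

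\emph{Backward direction: setup.} Assume every horoball is contracting and $\partial_c^{\mathcal{FQ}}G^h$ is compact. The plan is to show that $G^h$ is quasi-isometric to a hyperbolic weak hull.
\begin{itemize}
\item Since each $H_i$ is infinite, the vertical rays in a horoball are geodesics of $G^h$. As the horoball is contracting, each vertical ray determines a point of $\partial_c G^h$; call these parabolic points.
\item Since the $H_i$ have infinite index, there are infinitely many cosets and hence infinitely many parabolic points. So the boundary has at least three points and the weak hull $\mathfrak{h}(\partial_c G^h)$ is defined.
\item The weak hull is $G$-invariant.
\end{itemize}

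\emph{Backward direction: main obstacle.} The key step is to show that the whole boundary lies in a single stratum $\partial^N_M(G^h)_p$. This would make $\mathfrak{h}(\partial_c G^h)$ stable, hence hyperbolic by Proposition~\ref{prop_weakhull_hyperbolic} and Theorem~\ref{thm_properties_of_stable_sets}.
\begin{itemize}
\item I expect this to be the main difficulty. By the Remark above, compactness in the $\mathcal{FQ}$ topology does not in general bound the gauge.
\item I would argue by contradiction. Take rays $\alpha_n$ based at $p$ whose contraction functions $\rho_{\alpha_n}$ are not uniformly bounded by a single sublinear function.
\item Using cocompactness of $G$ on the Cayley graph part, translate the bad subsegments (where contraction fails) back near the basepoint.
\item Each bad subsegment either stays in a bounded neighbourhood of the Cayley graph, or enters a horoball. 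In the second case, the contraction of the horoballs, which is uniform because there are finitely many $G$-orbits of horoballs, lets us replace the bad piece by a vertical detour.
\item Either way, concatenating with a fixed contracting ray produces a sequence of boundary points that has no convergent subsequence in $\mathcal{FQ}$: any candidate limit $\zeta$ would force fellow-travelling within $k(\rho_\zeta,L,A)$, contradicting the growing non-contraction. This contradicts compactness.
\end{itemize}

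\emph{Backward direction: coarse density and conclusion.} Finally, show that $\mathfrak{h}(\partial_c G^h)$ is coarsely dense in $G^h$.
\begin{itemize}
\item The hull contains a bi-infinite geodesic from a parabolic point to another boundary point. This geodesic eventually runs close to a vertical ray, by contraction of the horoball.
\item By $G$-invariance and cocompactness on the Cayley graph, every vertex of the Cayley graph lies within bounded distance of the hull.
\item Every point deep in a horoball lies on a vertical ray, which is uniformly close to such a geodesic in the hull.
\item Hence $G^h$ is quasi-isometric to a hyperbolic space, and is therefore hyperbolic, so $(G,\mathcal{H})$ is relatively hyperbolic.
\end{itemize}
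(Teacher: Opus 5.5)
Your forward direction is fine. So is the last stage of the backward direction: once $\mathfrak{h}(\partial_c G^h)$ is hyperbolic, $G$-translates and vertical rays make it coarsely dense, and then $G^h$ is hyperbolic. This skeleton is exactly the paper's proof of its Morse-boundary analogue, Theorem~\ref{prop_condition_to_be_rel_hyp}. The paper does not reprove the quoted Pal--Pandey theorem, and it calls that proof technical.

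In the Morse-boundary setting the decisive input comes for free: every boundary point is represented by an $N$-Morse ray from $p$ for one gauge $N$. This holds because a compact subset of the direct limit $\partial_M^{\mathcal{DL}}G^h$ lies in a single stratum (first part of Proposition~\ref{prop_weakhull_hyperbolic}). In the $\mathcal{FQ}$ setting this uniform-gauge step is the whole difficulty, and your proposal does not prove it.

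Compactness of the entire $\mathcal{FQ}$ boundary does not give a uniform gauge in general. Take a ray $\gamma$ and, at each $\gamma(k^2)$, attach a branch ray one of whose subsegments is a side of a Euclidean $k\times k$ square. Since $\gamma(k^2)$ is a cut point, the branch endpoints converge to $\gamma(\infty)$ in $\mathcal{FQ}$, so the boundary is compact. Yet the ray from $\gamma(0)$ into the $k$-th branch has Morse gauge with $N(3,0)\ge k$. So the $G$-action and the horoball hypothesis must be used quantitatively, and $G$ does not act cocompactly on $G^h$.

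Your proposed mechanism does not supply this, for three reasons.

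\emph{The limit argument.} $\mathcal{FQ}$-convergence $\eta_k\to\zeta$ only says that, for each fixed $r$, quasi-geodesic representatives of $\eta_k$ eventually come within $k(\rho_\zeta,L,A)$ of $\alpha_\zeta\cap N_r^c(o)$. The non-contraction you moved to the basepoint lives at scales $r_k\to\infty$, which are unrelated to the rate of convergence. Nothing prevents the representatives of $\eta_k$ from leaving $\alpha_\zeta$ at distance comparable to $r_k$. In that case a detour of size $r_k$ near $o$ contradicts neither the convergence nor the contraction of $\alpha_\zeta$. So ``any candidate limit would force fellow-travelling \dots\ contradicting the growing non-contraction'' is unjustified.

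\emph{The concatenation.} Gluing a translated bad segment to a translate of a fixed contracting ray need not give a uniform quasi-geodesic, since it can double back. The new boundary points therefore need not see the bad segment at all.

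\emph{The horoball case.} When contraction fails deep inside a horoball, you cannot translate the failure back near the basepoint. ``Replace the bad piece by a vertical detour'' gives no reason why the resulting sequence would have no $\mathcal{FQ}$-limit, and this is precisely where the contracting-horoball hypothesis has to do real work.

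Until the uniform gauge is actually established, or avoided by a different route, Proposition~\ref{prop_weakhull_hyperbolic} cannot be applied and the backward implication is not proved.
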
 

The proof of Theorem~\ref{thm_pal_pandey} is somewhat technical. However, it becomes simpler if one considers the Morse boundary of the cusped space \(G^h\) to be compact, rather than working with the contracting boundary.

\begin{theorem}\label{prop_condition_to_be_rel_hyp}
    Let $G$ be a finitely generated group and let $\mathcal{H}=\{H_1,\dots,H_n\}$ be a finite collection of finitely generated infinite subgroups of $G$. The pair $(G,\mathcal{H})$ is relatively hyperbolic if and only if the Morse boundary $\partial_M G^h$ is compact and every vertical rays in $G^h$ are Morse.
\end{theorem}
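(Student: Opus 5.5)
For the forward direction, we will assume $(G,\mathcal{H})$ is relatively hyperbolic, so $G^h$ is $\delta$-hyperbolic and proper. In a $\delta$-hyperbolic space every geodesic, and in particular every vertical ray, is $N$-Morse for a single gauge $N$ depending only on $\delta$. Hence $\partial_M G^h=\partial_M^N G^h_p$ coincides with the Gromov boundary $\partial G^h$, and the direct limit topology agrees with the visual topology, since the direct system stabilizes at $N$. The Gromov boundary of a proper hyperbolic space is compact, so this direction is routine.

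For the converse, we will assume $\partial_M G^h$ is compact and all vertical rays are Morse. The plan is to show that $G^h$ itself coincides, up to finite Hausdorff distance, with the weak hull $\mathfrak{h}(\partial_M G^h)$, and then apply Proposition~\ref{prop_weakhull_hyperbolic}. We may assume $\partial_M G^h$ has at least two points; the degenerate cases are handled by the remarks after Theorem~\ref{thm_1}. By Proposition~\ref{prop_weakhull_hyperbolic}, the weak hull $Y=\mathfrak{h}(\partial_M G^h)$ is stable and quasiconvex in some $X_p^{(N')}$, hence Gromov hyperbolic. We will then show that every point of $G^h$ lies within uniformly bounded distance of $Y$. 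Since $Y$ is quasiconvex and stable, geodesics of $G^h$ between points of $Y$ are uniformly Morse and stay near $Y$. Thus $G^h$ would be quasi-isometric to the hyperbolic space $Y$, and therefore hyperbolic, which gives relative hyperbolicity by Definition~\ref{defn_rel_hyp}.

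The coboundedness step runs as follows. First, $Y$ is $G$-invariant because $G$ acts on $G^h$ by isometries preserving $\partial_M G^h$. Fix a vertical ray $v$ based at the identity in some horoball. By hypothesis $v$ is Morse, so it defines a point $\xi\in\partial_M G^h$. Choose a second boundary point $\eta\neq\xi$, which exists because the boundary has at least two points; when $\mathcal{H}$ is nonempty and $G$ is not virtually cyclic, $\xi$ and $g\xi$ for some $g\in G$ will do. By visibility there is a bi-infinite Morse geodesic $\ell$ from $\eta$ to $\xi$. Since $\ell$ and $v$ are asymptotic Morse rays, a tail of $\ell$ lies within bounded distance $C$ of $v$. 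Because $v$ is the unique path into the cusp in a combinatorial horoball, every point $(h,n)$ with $n$ large enough lies close to $\ell\subset Y$. Translating by the finitely many coset representatives and using the $H_i$-action along each horosphere, every point at depth at least $n_0$ in every horoball is within a uniform distance of $Y$. Points at bounded depth lie within bounded distance of $G$, and $G$ acts cocompactly on the Cayley graph. Moreover, the orbit $G\cdot\ell\subset Y$ meets a bounded neighbourhood of every vertex of $G$. Together these show that $G^h$ lies in a bounded neighbourhood of $Y$.

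The main obstacle is uniformity in the horoball step. We need all vertical rays to be \emph{uniformly} Morse, or at least we need to control how far the points $(h,n)$ are from $Y$ independently of $h$. Two facts should give this. First, the $H_i$-translates of $v$ are isometric images of $v$, so they share its gauge. Second, for each $i$, $H_i$ acts cocompactly on the horosphere, and therefore on each depth level of the horoball. Hence translating $\ell$ by $H_i$ handles all vertical rays in that horoball with the same constant, and there are only finitely many $i$. If we are in the case where $\mathcal{H}$ is empty, the argument reduces directly to Theorem~\ref{thm_characterization_hyp_gp}.
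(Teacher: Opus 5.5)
Your proposal is correct and follows essentially the paper's argument. In both, the forward direction uses the fact that every geodesic in the hyperbolic space $G^h$ is uniformly Morse. The converse uses Proposition~\ref{prop_weakhull_hyperbolic} to make $\mathfrak{h}(\partial_M G^h)$ hyperbolic and stable, and then uses $G$-invariance of the hull together with a bi-infinite geodesic ending at a cusp point to show the hull is coarsely all of $G^h$.

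Your formulation $G^h\subset N_R(\mathfrak{h}(\partial_M G^h))$, obtained from translates of a single geodesic $\ell$ asymptotic to a vertical ray, is in fact more careful than the paper's literal claim $\mathfrak{h}(\partial_M G^h)=G^h$. One correction: to get $g\xi\neq\xi$ you need $g\notin H_i$, i.e.\ $H_i\neq G$; the condition that $G$ is not virtually cyclic is not what matters.
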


\begin{proof}
    If the pair $(G,\mathcal H)$ is relatively hyperbolic, then the Morse boundary $\partial_M G^h$ coincides with the Bowditch boundary; in particular, it is compact and contains all limit points arising from the cusps of $G^h$. Conversely, suppose that $\partial_M G^h$ is compact and contains all limit points arising from the cusps of $G^h$. By Proposition~\ref{prop_weakhull_hyperbolic}, the weak hull \(\mathfrak{h}(\partial_M G^h) \subset G^h\) is Gromov hyperbolic. We claim that $\mathfrak{h}(\partial_M G^h) = G^h$, which will complete the proof.
    
    Because $\partial_M G^h$ contains at least three points, consider a geodesic $[a,b]$ joining two distinct points $a,b \in \partial_M G^h$. This geodesic must pass through $G$, and let $g \in G \cap [a,b]$. Then $g \in \mathfrak{h}(\partial_M G^h)$, and by $G$ acting isometrically on itself, we deduce that \(G \subset \mathfrak{h}(\partial_M G^h).\)

    Now, let $i = 1, \dots, n$, and consider any $g \in G$. Let $a_i \in \mathfrak{h}(\partial_M G^h)$ be the limit point of the left coset $gH_i$. For any $b \in \mathfrak{h}(\partial_M G^h)$ distinct from $a_i$, consider a geodesic $[a_i,b]$ and let $h_i \in gH_i \cap [a_i,b]$. Then $[h_i,a_i] \subset \mathfrak{h}(\partial_M G^h)$.
    Since the subgroup $gH_ig^{-1}$ acts by isometries on $G$ and preserves $gH_i$, all vertical rays in the horoball $(gH_i)^h$ lie inside $\mathfrak{h}(\partial_M G^h)$. Therefore, \((gH_i)^h \subset \mathfrak{h}(\partial_M G^h)\). Since $i$ and $g$ were chosen arbitrary, for every $i = 1, \dots, n$ and every $g \in G$, the horoball $(gH_i)^h$ is contained in $\mathfrak{h}(\partial_M G^h)$. Hence, we conclude that \(G^h = \mathfrak{h}(\partial_M G^h),\) as required.
\end{proof}

\section{Proof of Theorem~\ref{thm_1}}

Throughout this section, we assume that $G$ is a finitely generated group and that \(\mathcal{H}=\{H_1,H_2,\ldots,H_n\}\) is a finite (possibly empty) collection of finitely generated subgroups of infinite index in $G$. Let $G^h$ denote the cusped space associated to the pair $(G,\mathcal{H})$, and let $\partial_M G^h$ and \(\partial_c G^h\) denote its Morse and contracting boundary, respectively.

Suppose that the contracting boundary $\partial_c G^h$ is compact and contains two points. Then $G$ acts on $\mathfrak{h}(\partial_c G^h)$ quasiisometric to $\mathbb{R}$ and hence is virtually cyclic.If $\mathcal{H}$ is non-empty, then the only possible geodesic rays in $G^h$ are vertical rays. This is not possible since if $H$ is a proper infinite subgroup of $G$ then $H$ and $gH$ are Hausdorff close which contradicts that the vertical ray of $H$ is contracting.

\begin{theorem} \label{thm_all_vertical_rays_are_contracting}
    Suppose that the contracting boundary $\partial_c G^h$ is compact and contains at least three points. Then all vertical geodesic rays in $G^h$ are uniformly contracting.
\end{theorem}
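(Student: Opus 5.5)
Assuming $\partial_c^{\mathcal{FQ}} G^h$ is compact with at least three points, we aim to show that each vertical ray $\nu$ in a horoball $(gH_i)^h$ is contracting, with one sublinear function $\rho$ for all such rays. By $G$-equivariance there are only finitely many $G$-orbits of vertical rays (one per $H_i$, and within a horoball the subgroup $gH_ig^{-1}$ permutes the rays transitively). So it suffices to show that a single vertical ray $\nu_i$ from $1\in H_i$ is contracting, and to take the maximum of the finitely many resulting functions.

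\textbf{Step 1: rays entering the horoball deeply.} Let $h_k\in H_i$ with $d_{H_i}(1,h_k)\to\infty$. In the horoball, a geodesic from $1$ to $h_k$ goes up vertically about $\log_2 d(1,h_k)$, crosses horizontally a bounded distance, and comes back down. Since $\partial_c G^h$ has at least three points, it contains a contracting ray; using the cocompact $G$-action we get contracting bi-infinite geodesics through points near every vertex of $G$. Fix $\zeta\in\partial_c G^h$ with a contracting representative ray $\alpha$ based at $1$. For each $n$, consider the point $x_n=(1,n)$ on $\nu_i$. We would like contracting rays $\beta_n$ that follow $\nu_i$ up to height about $n$ and then leave. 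Concatenate $\nu_i|_{[0,n]}$, a horizontal segment at level $n$ of length $\le 1$ to $(h,n)$ with $d_{H_i}(1,h)\approx 2^n$, the vertical descent to $h$, and then the translate $h\alpha$, perhaps after choosing $\zeta$ so that $h\alpha$ leaves $hH_i$ quickly. This concatenation is a uniform quasi-geodesic, by the standard horoball geometry and because $\alpha$ may be chosen to move away from the coset. Its endpoint $\eta_n$ lies in $\partial_c G^h$, so it is contracting, though a priori not uniformly.

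\textbf{Step 2: compactness forces uniformity.} Here I would use metrizability (the cited Pal--Pandey lemma) and compactness to extract a convergent subsequence $\eta_n\to\eta$ in $\partial_c^{\mathcal{FQ}} G^h$. By the definition of the neighbourhoods $U(\eta,r)$, any quasi-geodesic representative of $\eta_n$ for large $n$ must come within $k(\rho_\eta,L,A)$ of $\alpha_\eta\cap N_r^c(1)$, for each fixed $r$. The plan is to show that the geodesic representatives of $\eta_n$ run along $\nu_i$ for length about $n$. Convergence, together with the fact that the points $(h,0)$ escape to infinity, should then force the limiting geodesic $\alpha_\eta$ to fellow travel $\nu_i$ for longer and longer segments, so that $\eta=\nu_i(\infty)$, and indeed $\nu_i$ itself represents a point of $\partial_c G^h$. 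To get this, I would first need the $\eta_n$ to stay in a compact family of gauges. The Remark shows that compactness in $\mathcal{FQ}$ need not give uniform gauges. I would therefore argue the opposite way. Suppose $\nu_i$ is not contracting. Then one can pick the $\beta_n$ so that no point is a limit: any candidate limit $\eta$ has a fixed function $\rho_\eta$, and the neighbourhood $U(\eta,r)$ requires every $(L,A)$ quasi-geodesic of $\eta_n$ to meet $\alpha_\eta$ far out within distance $k(\rho_\eta,L,A)$. Meanwhile the $\eta_n$ admit $(L,A)$ quasi-geodesic representatives that take large detours along the non-contracting vertical direction, for a fixed $(L,A)$ coming from the horoball geometry. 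This contradicts convergence, so the sequence has no convergent subsequence, contradicting compactness.

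\textbf{Main obstacle.} The hardest step is making Step 2 rigorous: producing, for a fixed $(L,A)$, quasi-geodesic representatives of $\eta_n$ that avoid every fixed contracting ray $\alpha_\eta$ beyond radius $r$ in a neighbourhood of size $k(\rho_\eta,L,A)$. I would first try to build the detour inside the horoball $H_i^h$, which is itself hyperbolic and quasi-isometrically embedded in the relevant range, replacing the vertical climb by a path that climbs, wanders horizontally at height $n/2$, and descends. Once $\nu_i$ is shown to be contracting, uniformity over all vertical rays follows from the finite orbit count in the first paragraph.
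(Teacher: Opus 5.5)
There is a genuine gap, and it is not quite where you locate it.

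\textbf{The contradiction at the end of Step 2 does not follow.} To get $\eta_n\notin U(\eta,r)$ you need an $(L,A)$-quasi-geodesic ray in $\eta_n$ that stays at distance $>k(\rho_\eta,L,A)$ from \emph{all} of $\alpha_\eta\cap N_r^c(o)$. Non-contraction of $\nu_i$ only gives quasi-geodesics that stray from $\nu_i$. Nothing prevents such a ray, for instance along its tail $h\alpha$, from passing near the unknown ray $\alpha_\eta$ far out.

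\textbf{Your reason for dropping the direct argument is misplaced.} Suppose the $\beta_n$ really were continuous $(L,A)$-quasi-geodesic rays from $o=1$, with $L,A$ independent of $n$, and $\eta_n\to\eta$. Then for large $n$ some $\beta_n(s_n)$ lies within $k=k(\rho_\eta,L,A)$ of a point $\alpha_\eta(t_n)$ with $t_n\ge r$. Close up with a short segment and use only the Morse gauge of $\alpha_\eta$, which is determined by $\rho_\eta$. This puts $\beta_n|_{[0,s_n]}$, and hence $\nu_i$ up to height $\min\{n,(r-k-A)/L\}$, in an $M$-neighbourhood of $\alpha_\eta$, with $M$ independent of $n$ and $r$. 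So $\nu_i$ would be at bounded Hausdorff distance from $\alpha_\eta$, hence contracting. No uniformity of the gauges of the $\eta_n$ is needed, and everything rests on Step 1.

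\textbf{Step 1 is where the argument breaks.} The up--across--down path is a uniform quasi-geodesic for the intrinsic metric of $H_i^h$. To be a uniform quasi-geodesic in $G^h$, however, it needs $d_{G^h}(1,h)$ bounded below by $2\log_2 d_{H_i}(1,h)$ up to uniform multiplicative and additive error. A priori nothing rules out shortcuts through $G$ (if $H_i$ is strongly distorted) or through other horoballs. Likewise, you have no mechanism for choosing $\zeta$ so that $h\alpha$ leaves $hH_i$ with constants independent of $n$.

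In effect, Step 1 presupposes that contracting quasi-geodesics with uniform constants penetrate $H_i^h$ arbitrarily deeply. That is exactly the dichotomy the paper has to confront.
\begin{itemize}
\item If the weak hull $\mathfrak{h}(\partial_c G^h)$ reaches arbitrarily deep into the horoballs over the subgroups with non-contracting vertical rays, the paper gets its rays for free. They are actual geodesics with endpoints in $\partial_c G^h$, so nothing has to be constructed, and a compactness argument of the kind above applies.
\item Otherwise the weak hull lies in a bounded neighbourhood of the cusped space $G_1^h$ built only from the peripherals $H_1,\dots,H_i$ whose vertical rays are contracting, so $\partial_c G_1^h$ is compact. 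Then either $G$ is hyperbolic (if $i=0$, by Cordes--Durham/Cashen--Mackay) or $(G,\{H_1,\dots,H_i\})$ is relatively hyperbolic (by Pal--Pandey). An infinite-order, respectively loxodromic, element of one of the other $H_j$ then has a quasi-geodesic orbit whose displacement in $G^h$ is only logarithmic because of the horoball over $H_j$, a contradiction.
\end{itemize}
Your proposal has no counterpart to this second case, and that is where the group-theoretic input lives. Your reduction of uniformity to finitely many $G$-orbits of vertical rays is fine and matches the paper.
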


\begin{proof} 
    We argue by contradiction. Up to renaming the subgroups if necessary, suppose that the contracting boundary $\partial_c G^h$ contains all vertical rays arising from the cusps of the cosets of $H_1,\dots,H_i$, where $0\le i<n$, and does not contain the vertical rays arising from the cusps of the cosets of $H_{i+1},\dots,H_n$. Let $G_1^h$ denote the cusped space corresponding to the pair $(G,H_1,\dots,H_i)$.

    \medskip

    \noindent
    \textbf{Claim.} There exists $R\ge 0$ such that the weak hull \(\mathfrak{h}(\partial_c G^h)\) is contained in the \(R\)-neighbourhood of \(G_1^h\) with respect to the metric $d_{G^h}$, that is, 
    \[
    \mathfrak{h}(\partial_c G^h)\subset N_R(G_1^h).
    \]

    \medskip

    \noindent
    Suppose, for a contradiction, that this is false. Then for each $k\ge 1$ there exists a point \(z_k\in \mathfrak{h}(\partial_c G^h)\) such that
    \[
    z_k\in (g_kH_{i_k})^h 
    \text{ for some }g_k\in G,\, i_k\in\{i+1,\dots,n\},\quad \text{and} \quad d_{G^h}(z_k,G_1^h)>k.
    \]

    Passing to a subsequence, we may assume $i_k=i+1$ for all $k$. Then
    \[
    g_k^{-1}z_k\in H_{i+1}^h \quad \text{and} \quad
    d_{G^h}(g_k^{-1}z_k,H_{i+1}) \geq d_{G^h}(g_k^{-1}z_k,G_1^h)>k.
    \]

    Since the action of $G$ on $G^h$ preserves the weak hull $\mathfrak{h}(\partial_c G^h)$, we have
    \[
    g_k^{-1}z_k\in \mathfrak{h}(\partial_c G^h).
    \]
    Hence there exists a bi-infinite geodesic $\gamma_k'$ in $G^h$ passing through $g_k^{-1}z_k$ whose endpoints lie in $\partial_c G^h$. Let $h_k\in H_{i+1}$ be a point where $\gamma_k'$ intersects $H_{i+1}$. After reparametrization, we may assume
    \[
    g_k^{-1}z_k\in [h_k,\gamma_k'(\infty)).
    \]
    Left translating by $h_k^{-1}$ we obtain
    \[
    h_k^{-1}g_k^{-1}z_k\in [1,h_k^{-1}\gamma_k'(\infty)).
    \]

    Thus there exist sequences $\{w_k\}\subset H_{i+1}$ and $\{a_k\}\subset\partial_c G^h$ and geodesic rays \(\gamma_k=[1,a_k)\) such that
    \[
    w_k\in \gamma_k,\quad [1,w_k] \subset H_{i+1}^h \quad \text{and} \quad d_{G^h}(w_k,G_1^h)>k .
    \] 
    Since $\partial_c G^h$ is compact, after passing to a subsequence we may assume that \(a_k\to a\in \partial_c G^h\). 

    Consider the vertical geodesic ray \(\gamma\) in $H_{i+1}^h$ based at $1$. Then the rays $\gamma_k$ converge uniformly on compact sets to $\gamma$. Consequently,
    \[
    \gamma(\infty) = a \in \partial_c G^h .
    \] 
    However, no vertical ray in $H_{i+1}^h$ is contracting. This contradiction proves the claim.

    \medskip

    Therefore, $\mathfrak{h}(\partial_c G^h)$ lies in a bounded neighbourhood of $G_1^h$. It follows that the Hausdorff distance between $G_1^h$ and $\mathfrak{h}(\partial_c G^h)$ is finite in \(G^h\). 
    As \(G_1^h\) is properly embedded in \(G^h\), the metrics $d_{G_1^h}$ and $d_{G^h}$ on $G_1^h$ are quasi-isometric. Hence, the contracting boundary $\partial_c(G_1^h,d_{G_1^h})$ is homeomorphic to $\partial_c G_1^h$, and therefore is compact.

    \medskip 

    \noindent
    \textbf{Case 1.} Suppose $i=0$, so that $G_1^h=G$.

    Then $\partial_c G$ is compact. By Theorem~\ref{thm_characterization_hyp_gp} it follows that $G$ is hyperbolic. For any fix \(j \in \{1,\dots,n\}\), since $H_j$ is an infinite subgroup of $G$, it contains an element $h$ of infinite order. Consider the path
    \[
    \alpha=[1,h]\cup[h,h^2]\cup[h^2,h^3]\cup\cdots
    \]
    in $G$. Since $G$ is hyperbolic, the orbit $\{h^n\}$ forms a quasi-geodesic in $G$, and hence $\alpha$ is a quasi-geodesic in $G$. However, in the cusped space $G^h$, the path $\alpha$ stays within a bounded neighbourhood of the horoball corresponding to the coset of $H_j$ and travels essentially horizontally. In particular, $\alpha$ is also a quasi-geodesic in $G^h$. Also, the distance between $\alpha(0)$ and $\alpha(n)$ in the metric $d_{G^h}$ grows only logarithmically in $n$, while the parameter length of $\alpha$ grows linearly. Thus $\alpha$ cannot be a quasi-geodesic in $G^h$, a contradiction.

    \medskip 

    \noindent
    \textbf{Case 2.} Suppose $i\neq 0$.

    Since $\partial_c G_1^h$ is compact and contains at least three points and every vertical rays in $G_1^h$ are contracting, it follows from Theorem~\ref{thm_pal_pandey} that $G$ is hyperbolic relative to $H_1,\dots,H_i$, that is, $(G_1^h,d_{G_1^h})$ is hyperbolic.

    Now for each $j\in\{i+1,\dots,n\}$ the subgroup $H_j$ acts by isometries on the hyperbolic space $(G_1^h,d_{G_1^h})$. Since $H_j$ is infinite, its orbit in $G_1^h$ is unbounded. Hence, the limit set $\Lambda(H_j)$ in $\partial G_1^h$ is nonempty. Suppose that $|\Lambda(H_j)|=1$. Then $H_j$ acts parabolically on $G_1^h$, fixing a unique point of $\partial G_1^h$. However, $H_j$ is not one of the peripheral subgroups used in the construction of $G_1^h$, and therefore it cannot act parabolically. Thus $|\Lambda(H_j)|\neq 1$. Consequently, \(|\Lambda(H_j)|\ge 2\). It follows that $H_j$ contains a loxodromic element (see \cite[Theorem~2.5]{Osin2018}). Let \(h \in H_j\) be a loxodromic element. Consider the path
    \[
    \alpha=[1,h]\cup[h,h^2]\cup[h^2,h^3]\cup\cdots
    \]
    in $G_1^h$. Since $h$ acts loxodromically, the orbit $\{h^n\}$ is a quasi-geodesic in $G_1^h$, and therefore $\alpha$ is a quasi-geodesic ray in $G_1^h$. As the metrics $dG_1^h$ and $dG^h$ are quasiisometric, we get that $\alpha$ is a quasigeodesic ray in $G_1^h$ with respect to the metric $d_{G^h}$ and hence \(\alpha\) is a quasigeodesic in $G^h$. 

    However, when viewed in the cusped space $G^h$, the path $\alpha$ lies inside the horoball corresponding to the coset of $H_j$ and travels essentially horizontally. Consequently, the distance between $\alpha(0)$ and $\alpha(n)$ in the metric $d_{G^h}$ grows only logarithmically in $n$, while the parameter length of $\alpha$ grows linearly. Thus, $\alpha$ fails to be a quasi-geodesic in $G^h$, again a contradiction.

    \medskip

    Both cases lead to contradictions. Therefore, every vertical geodesic ray in $G^h$ must be contracting. Every vertical geodesic ray based at cosets of the same subgroups is uniformly Morse. Since the collection $\mathcal{H}$ contains only finitely many subgroups, the contraction functions can be chosen uniformly. This completes the proof.     
\end{proof}

\begin{corollary}\label{cor_all_vertical_rays_are_contracting}
    Let $G$ be a finitely generated group and let $\mathcal{H}=\{H_1,\dots,H_n\}$ be a finite collection of finitely generated infinite subgroups of $G$. If the contracting boundary $\partial_c^{\mathcal{FQ}} G^h$ is a non-empty compact set containing at least three points, then every combinatorial horoball in the cusped space $G^h$ is contracting.
\end{corollary}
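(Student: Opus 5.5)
The corollary should follow from Theorem~\ref{thm_all_vertical_rays_are_contracting} together with a passage from contracting vertical rays to contracting horoballs. The plan has three steps.

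First, the hypotheses of Theorem~\ref{thm_all_vertical_rays_are_contracting} hold: $\partial_c G^h$ is compact and contains at least three points. That theorem then gives a single sublinear function $\rho$ such that every vertical geodesic ray in $G^h$ is $\rho$-contracting. By the Morse/contracting equivalence (\cite[Theorem~1.4]{ACGH2017}, \cite[Theorem~2.2]{CM2019}), this is the same as a uniform Morse gauge $N$ for all vertical rays.

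Next, fix a horoball $(gH_i)^h$. Translating by $g^{-1}$ and using that there are finitely many $H_i$, we may assume $g=1$. I would show that $H_i^h$ is $N'$-Morse for some gauge $N'$ depending only on $N$ and the geometry of horoballs. Take a $(K,C)$-quasi-geodesic $\beta$ with endpoints $x,y\in H_i^h$.
\begin{itemize}
\item Join $x$ to $y$ by the standard horoball geodesic: go vertically up from $x$, cross a bounded number of horizontal edges, then go vertically down to $y$. This path is a uniform quasi-geodesic in $G^h$.
\item Its vertical pieces are subsegments of vertical rays, so they are uniformly $N$-Morse.
\item Concatenate $\beta$ with these vertical segments to form a quasi-geodesic polygon with boundedly many sides. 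Morse stability (Lemma~\ref{lem_Morse_quasi-geodesic_stability}) and slimness of Morse triangles (Lemma~\ref{lem_4N_slim_triangle}) then place $\beta$ in a uniform neighbourhood of the two vertical segments plus the short horizontal piece.
\item Since that union lies in $H_i^h$, $\beta$ lies in a uniform neighbourhood of $H_i^h$.
\end{itemize}
This shows $H_i^h$ is Morse as a subset, hence contracting by the same equivalence theorem. Uniformity over cosets comes from the $G$-action together with the finiteness of $\mathcal{H}$.

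The main obstacle is the middle step. There is no ambient hyperbolicity, so a quasi-geodesic between two points of the horoball could a priori leave $H_i^h$, wander through $G$, and come back. I would handle this by splitting $\beta$ at its point closest to the top of the geodesic $[x,y]$ and applying Morse stability to each half separately against the vertical segments. If that split is awkward, the fallback is to verify the contracting definition directly. In that approach, the closest-point projection to $H_i^h$ is coarsely determined by the projection onto the vertical ray through the projected point, and the uniform sublinear $\rho$ controls the projection diameters.
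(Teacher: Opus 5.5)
Your reduction to Theorem~\ref{thm_all_vertical_rays_are_contracting} is exactly what the paper does. The paper states the corollary right after that theorem with no further argument, tacitly treating ``all vertical rays are uniformly contracting'' as ``every horoball is contracting''. The bridge you add between these two is the right thing to supply, but as written it has a gap.

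The claim that the up--across--down path $\sigma$ from $x$ to $y$ is a uniform quasi-geodesic in $G^h$ does not hold in cusped spaces in general. $G^h$ allows shortcuts through $G$ and through other horoballs. If, say, $H_i$ were super-exponentially distorted in $G$, there would be $h\in H_i$ with $d_{G^h}(1,h)\le d_G(1,h)$ far smaller than $2\log_2 d_{H_i}(1,h)$, which is roughly the length of $\sigma$. Under your hypothesis the claim can be derived from uniform Morseness of vertical rays, but that needs its own argument, and you do not give one.

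Your proposed repair is circular. Applying Lemma~\ref{lem_Morse_quasi-geodesic_stability} to each half of $\beta$ against a vertical segment requires common endpoints. You would therefore already need to know that $\beta$ passes boundedly close to the top of $\sigma$, which is the conclusion. The contracting fallback is only asserted: it says the projection to $H_i^h$ is coarsely governed by the projection to one vertical ray, without proof.

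You can avoid needing $\sigma$ to be a quasi-geodesic at all.
\begin{enumerate}
\item Set up the triangle. Let $x'$ be the end of the vertical piece $v_x$ where the horizontal piece begins. Let $\alpha$ be a $G^h$-geodesic from $x'$ to $y$, and let $\gamma$ be a $G^h$-geodesic from $x$ to $y$. Vertical segments are $G^h$-geodesics, since each edge changes depth by at most one.
\item Control $\alpha$. The endpoints of $\alpha$ lie within distance $3$ of those of the vertical piece $v_y$. The Morse property of $v_y$ therefore puts $\alpha$ in a uniform neighbourhood of $v_y\subset H_i^h$ and makes $\alpha$ uniformly Morse.
\item Control $\gamma$. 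Apply Lemma~\ref{lem_4N_slim_triangle} to $v_x$ (reversed) and $\alpha$, both issuing from $x'$; this puts $\gamma$ in a uniform neighbourhood of $H_i^h$. The standard companion fact, that the third side of a geodesic triangle with two $N$-Morse sides is $N'$-Morse with $N'$ depending only on $N$, makes $\gamma$ uniformly Morse.
\item Control $\beta$. Lemma~\ref{lem_Morse_quasi-geodesic_stability} now puts every $(K,C)$-quasi-geodesic $\beta$ from $x$ to $y$ within a distance of $\gamma$ depending only on $N,K,C$. Hence $\beta$ lies in such a neighbourhood of $H_i^h$.
\end{enumerate}
So $H_i^h$ is a Morse subset, hence contracting. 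Uniformity over cosets then follows from the $G$-action and the finiteness of $\mathcal{H}$, as you say.
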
 

As a consequence of Corollary~\ref{cor_all_vertical_rays_are_contracting} and Theorem~\ref{thm_pal_pandey}, we obtain the following theorem.

\begin{theorem}\label{thm_contracting_compact_implies_RH}
    Let $G$ be a finitely generated group and let $\mathcal{H}=\{H_1,\dots,H_n\}$ be a finite collection of finitely generated infinite subgroups of $G$. If the contracting boundary $\partial_c^{\mathcal{FQ}} G^h$ is a non-empty compact set containing at least three points, then the pair $(G,\mathcal{H})$ is relatively hyperbolic.
\end{theorem}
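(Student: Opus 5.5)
The plan is to derive the statement directly from Corollary~\ref{cor_all_vertical_rays_are_contracting} combined with the ``if'' direction of Theorem~\ref{thm_pal_pandey}. Since the hypotheses of the two results differ slightly, the main work is to line them up.

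\textbf{Step 1: remove the degenerate cases.} Theorem~\ref{thm_pal_pandey} needs each $H_i$ to have infinite index in $G$. If some $H_i$ has finite index, then $H_i$ is coarsely dense in $G$. The horoball on $H_i$ (and on each of its finitely many cosets) then coarsely fills $G^h$, so $G^h$ is quasi-isometric to a single combinatorial horoball. Every geodesic ray in a horoball is eventually vertical, and all vertical rays determine a single boundary point. So $\partial_c G^h$ has at most one point, which contradicts the assumption of at least three points. We may therefore assume every $H_i$ has infinite index. We may also assume $\mathcal{H}\neq\emptyset$. Indeed, if $\mathcal{H}=\emptyset$ then $G^h=G$, and Theorem~\ref{thm_characterization_hyp_gp} shows $G$ is hyperbolic, which is relative hyperbolicity with respect to the empty collection.

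\textbf{Step 2: get contracting horoballs.} Under the remaining assumptions, Corollary~\ref{cor_all_vertical_rays_are_contracting} (which rests on Theorem~\ref{thm_all_vertical_rays_are_contracting}) gives that every vertical geodesic ray in $G^h$ is uniformly contracting. What Theorem~\ref{thm_pal_pandey} asks for is that every combinatorial horoball $(gH_i)^h$ is contracting as a subset of $G^h$. The corollary asserts this, but I would check the passage from rays to horoballs directly, for two reasons. First, each horoball lies within finite Hausdorff distance of nothing smaller, so uniform contraction of its vertical rays does not formally imply contraction of the whole horoball. Second, the horoball is the union of these uniformly Morse vertical rays.

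\textbf{Step 3: rays to horoballs, the main obstacle.} I would show that the horoball $(gH_i)^h$ is Morse, using the equivalence between Morse and contracting subsets. Take a $(K,C)$-quasi-geodesic $\alpha$ with endpoints $x,y$ in the horoball. Join $x$ and $y$ by a path in the horoball made of two vertical segments and a short horizontal segment at depth about $\log d(x,y)$; this path is a uniform quasi-geodesic in the horoball metric. Deep horizontal segments lie close to the vertical rays through their endpoints, so $\alpha$ is coarsely bounded by two uniformly Morse vertical rays. Applying Lemma~\ref{lem_Morse_quasi-geodesic_stability} and the slim-triangle Lemma~\ref{lem_4N_slim_triangle} to these Morse pieces then keeps $\alpha$ in a uniform neighbourhood of the horoball. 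The Morse gauge depends only on the gauge of the vertical rays, and this gauge is uniform over the finitely many $H_i$. This uniformity is the delicate point.

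\textbf{Step 4: conclude.} Compactness of $\partial_c^{\mathcal{FQ}}G^h$ is a hypothesis, and we now have contracting horoballs and infinite-index peripherals. Theorem~\ref{thm_pal_pandey} therefore yields that $(G,\mathcal{H})$ is relatively hyperbolic.
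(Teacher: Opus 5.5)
Your main line, Corollary~\ref{cor_all_vertical_rays_are_contracting} fed into the ``if'' direction of Theorem~\ref{thm_pal_pandey}, is exactly the paper's proof, which consists of nothing more than that combination. The problems are in the steps you add.

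In Step~1, the claim that a finite-index $H_i$ makes $G^h$ quasi-isometric to a single combinatorial horoball is false unless $\mathcal{H}=\{G\}$. In every other case $G^h$ has at least two horoballs: either $H_i\neq G$ has at least two cosets, or there is another peripheral subgroup. Go down the vertical ray of one horoball, across a bounded path in $G$, and up the vertical ray of another. This is a bi-infinite $(1,c)$-quasi-geodesic in $G^h$, because any path between two horoballs must pass through $G$ and depth changes by at most one per edge. A combinatorial horoball, however, is hyperbolic with a single point at infinity and so contains no bi-infinite quasi-geodesic. Your conclusion that $\partial_c G^h$ has at most one point would therefore need a different argument. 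You do not actually need this step: Section~4 opens with the standing assumption that every $H_i$ has infinite index, and that is what lets the paper quote Theorem~\ref{thm_pal_pandey} directly.

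In Step~3 you go beyond the paper, which passes from uniformly contracting vertical rays to contracting horoballs without comment. Your sketch does not close as written.
\begin{itemize}
\item The vertical--horizontal--vertical path is a priori a quasi-geodesic only for the intrinsic horoball metric, not for $d_{G^h}$. So Lemma~\ref{lem_Morse_quasi-geodesic_stability} cannot be applied to it.
\item Lemma~\ref{lem_4N_slim_triangle} controls a geodesic $[x,y]$, not an arbitrary $(K,C)$-quasi-geodesic $\alpha$.
\end{itemize}
The missing ingredient is the standard fact that in a geodesic triangle with two $N$-Morse sides, the third side is $N'$-Morse with $N'$ depending only on $N$. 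Take $z$ on the vertical ray through $x$, at a height above both $x$ and $y$ where the vertical ray through $y$ passes within distance $1$ of $z$. Then $[z,x]$ is $N$-Morse, and $[z,y]$ is uniformly Morse because it is uniformly close to a vertical segment. Hence $[x,y]$ is uniformly Morse, so $\alpha$ stays within $N'(K,C)$ of $[x,y]$. By slimness, $[x,y]$ stays uniformly close to $[z,x]\cup[z,y]$, and hence to the horoball. This gives each horoball a Morse gauge depending only on the uniform gauge of the vertical rays, and the Morse/contracting equivalence finishes the step.
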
 

For a proper geodesic metric space, if the Morse boundary is compact with respect to the direct limit topology, then the contracting boundary equipped with the fellow-travelling quasigeodesic topology is also compact. Thus, the following theorem is an immediate consequence of Theorem~\ref{thm_contracting_compact_implies_RH}.

\begin{theorem}\label{thm_Morse_compact_implies_RH}
    Let $G$ be a finitely generated group and let $\mathcal{H}=\{H_1,\dots,H_n\}$ be a finite collection of finitely generated infinite subgroups of $G$. If the Morse boundary $\partial_M^{\mathcal{DL}} G^h$ is a non-empty compact set containing at least three points, then the pair $(G,\mathcal{H})$ is relatively hyperbolic.
\end{theorem}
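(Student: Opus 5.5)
The plan is to reduce Theorem~\ref{thm_Morse_compact_implies_RH} to Theorem~\ref{thm_contracting_compact_implies_RH}, using the comparison between the two boundary topologies. First, the Morse and contracting boundaries coincide as sets (\cite[Theorem~2.2]{CM2019}). So if $\partial_M^{\mathcal{DL}} G^h$ is non-empty and has at least three points, the same holds for $\partial_c^{\mathcal{FQ}} G^h$. Second, the identity map $id:\partial_M^{\mathcal{DL}} G^h\to\partial_c^{\mathcal{FQ}} G^h$ is a continuous bijection. The continuous image of a compact space is compact, so the corollary following that lemma gives compactness of $\partial_c^{\mathcal{FQ}} G^h$. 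The hypotheses of Theorem~\ref{thm_contracting_compact_implies_RH} then hold, and $(G,\mathcal{H})$ is relatively hyperbolic.

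The only point needing care is the continuity of the identity map, i.e.\ that $\mathcal{DL}$ is finer than $\mathcal{FQ}$. By the universal property of the direct limit, it suffices to check that each inclusion $\partial_M^N G^h_p\to\partial_c^{\mathcal{FQ}}G^h$ is continuous. Fix $\zeta\in\partial_M^N G^h_p$ and a basic neighbourhood $U(\zeta,r)$. I need some $n$ such that every $N$-Morse ray $\beta$ from $p$ that $\delta_N$-fellow-travels $\alpha_\zeta$ up to time $n$ lies in $U(\zeta,r)$.

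Take a continuous $(L,A)$-quasi-geodesic ray $\beta'\in[\beta]$. Because $\beta$ is $N$-Morse, $\beta'$ stays within a distance of $\beta$ that depends only on $N,L,A$ (Lemma~\ref{lem_Morse_quasi-geodesic_stability}, together with the limiting argument for rays). Then $\beta'$ passes near $\alpha_\zeta(t)$ for some $t\ge r$, provided $n$ is large. The uniformity over all $L,A$ has to come from the shape of $k(\rho_\zeta,L,A)$, which grows with $L$ and $A$. I expect this uniformity to be the main obstacle. If it becomes delicate, I would simply cite \cite{CM2019}, where this comparison of topologies is established.

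Since the preceding results are taken as given, the proof of the theorem itself is just this short chain: set equality, continuous bijection, compact image, then Theorem~\ref{thm_contracting_compact_implies_RH}.
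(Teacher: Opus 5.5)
Your proposal is correct and is essentially the paper's own argument. The paper also deduces compactness of $\partial_c^{\mathcal{FQ}} G^h$ from compactness of $\partial_M^{\mathcal{DL}} G^h$ via the continuous identity bijection (the underlying sets coincide, so the three-point hypothesis transfers), and then applies Theorem~\ref{thm_contracting_compact_implies_RH}; it takes continuity of the identity as a known lemma without proof, so your fallback of citing \cite{CM2019} for that step matches what the paper does.
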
 

\begin{remark}
    Theorem~\ref{thm_all_vertical_rays_are_contracting} can also be established in the setting of the Morse boundary equipped with the direct limit topology. This follows from the fact that the weak hull $\mathfrak{h}(\partial_M G^h)$ is hyperbolic in the cusped space $G^h$ whenever $\partial_M^{\mathcal{DL}} G^h$ is compact and contains at least three points (see Proposition~\ref{prop_weakhull_hyperbolic}). 
    In this approach, one applies Theorem~\ref{prop_condition_to_be_rel_hyp} in place of Theorem~\ref{thm_pal_pandey} in Case~2 of the proof of Theorem~\ref{thm_all_vertical_rays_are_contracting}.

    As an immediate consequence of the above observation together with Theorem~\ref{prop_condition_to_be_rel_hyp}, Theorem~\ref{thm_Morse_compact_implies_RH} can be established independently within the framework of the Morse boundary equipped with the direct limit topology, without invoking the contracting boundary or the fellow-travelling quasi-geodesic topology.
\end{remark}

In Theorem~\ref{thm_1}, the implications \((1) \Rightarrow (2)\) and \((1) \Rightarrow (3)\) are immediate. Therefore, Theorem~\ref{thm_contracting_compact_implies_RH} and Theorem~\ref{thm_Morse_compact_implies_RH} together complete the proof of Theorem~\ref{thm_1}.




\bibliographystyle{amsplain}

\end{document}